\title{\large\textbf{Analytic regularity for Navier-Stokes-Korteweg model on pseudo-measure spaces}}
\author{\small Adrien TENDANI SOLER\footnote{                   \textit{E-mail}: adrien.tendani-soler@math.u-bordeaux.fr}}
\affil{\scriptsize Institut Mathématiques de Bordeaux\\
33405 Talence, France}
\newtheorem{theo}{Theorem}[subsection]
\newtheorem{defi}[theo]{Definition}
\newtheorem{notation}[theo]{Notation}
\newtheorem{lem}[theo]{Lemma}
\newtheorem{prop}[theo]{Proposition}
\newtheorem{coro}[theo]{Corollary}
\DeclareMathOperator{\rad}{rad}
\DeclareMathOperator{\im}{Im}
\newcommand{\C}{\mathbb{C}}
\newcommand{\R}{\mathbb{R}}
\newcommand{\enstq}[2]{\left\{#1~\middle|~#2\right\}}
\DeclareMathOperator{\divergence}{\mathop{}div}
\newcommand*\Laplace{\mathop{}\!\mathbin\bigtriangleup}
\newcommand{\revdots}{\mathinner
{ \mkern1mu\raise1pt\vbox{\kern7pt\hbox{.}} \mkern2mu\raise4pt\hbox{.} \mkern2mu\raise7pt\hbox{.}\mkern1mu}
}
\newcommand{\croixdots}{ \mathinner{
\mkern1mu\raise7pt\vbox{\kern7pt 
\hbox{.}}
\mkern-5mu
\raise7pt\vbox{\hbox{.}}
\mkern1mu }
}
\renewcommand{\abstitlestyle}[1]{\noindent}
\date{} 
\begin{document}
\newgeometry{twoside, inner=3.407cm,outer=3.053cm,top=2.23cm,bottom=3.1cm}

{\setlength{\baselineskip}{0.1\baselineskip}
\maketitle}

\hfill
\begin{abstract}
The purpose of this work is to study the existence and analytic smoothing effect for the compressible Navier-Stokes system with quantum pressure in pseudo-measure spaces. This system has been considered by B. Haspot and an analytic smoothing effect for a Korteweg type system was considered by F. Charve, R. Danchin and J. Xu,  both of them in Besov spaces. Here we give a better lower bound of the radius of analyticity near zero.
This work is an opportunity to deepen the study of partial differential equations in pseudo-measure spaces by introducing a new functional setting to deal with non-linear terms. The pseudo-measure spaces are well-adapted to obtain a point-wise control of solutions, with to study of turbulence as perspective.

\end{abstract}
\hfill
{\setlength{\baselineskip}{0.95\baselineskip}
\scriptsize\tableofcontents\par}
\hfill

\section{Introduction}
We are interested by the analytic smoothing properties of the Navier-Stokes-Korteweg system which describe a two-phase compressible and viscous fluids, of density $\rho$ and velocity field $u$. It is generally assumed that the phases are separated by a hypersurface and that the jump in the pressure across the interface is proportional to the curvature. Here we deal with a diffuse interface (DI) model that describes fluids when the change of phase corresponds to a fast but regular transition zone for the density and velocity. This type of models differs from the so-called sharp interface (SI) model when, the interface between phases corresponds to a discontinuity in the state space. The basic ideas of the DI model considering here, is to add to the classical compressible fluids equation a capillary term, that penalizes high variations of the density. The full derivation of the corresponding equation, that we shall name the compressible Navier-Stokes-Korteweg system is due to J. E. Dunn and J. Serrin (see \cite{OntheThermodynamicsofInterstitialWorking}).
\begin{equation}\label{eqNSK}
\begin{cases}
\partial_t\rho+\divergence(\rho u)=0,\\
\partial_t(\rho u)+\divergence(\rho u\otimes u)-{\mathcal A} u+\nabla \Pi=\divergence(\mathcal{K}),\\
(\rho,u)|_{t=0}=(\rho_0,u_0),
\end{cases}
\end{equation}
where $\Pi:=P(\rho)$ is the pressure function, ${\mathcal A}u:=\divergence\left(2\mu(\rho)D_S(u))+\nabla(\nu(\rho)\divergence{u})\right)$ is the diffusion operator, $D_S(u):=\frac 12(\nabla u+\!^t\nabla u)$ is the symmetric gradient and the capillarity tensor is given by
$$
{\mathcal K}:=\rho \divergence(\kappa(\rho)\nabla\rho)I_{\R^d}
+\frac12\big(\kappa(\rho)-\rho\kappa'(\rho)\big)|\nabla\rho|^2I_{\R^d}
-\kappa(\rho)\nabla\rho\otimes\nabla\rho.
$$
This system is due to J. E. Dunn and J. Serrin in \cite{OntheThermodynamicsofInterstitialWorking}.
The density-dependent capillarity function $\kappa$ is assumed to be positive.
Note that for smooth enough density $\rho$ and capillarity function $\kappa$, we have
\begin{equation*}
\divergence{\mathcal K}=\rho\nabla\Big(\kappa(\rho)\Delta\rho+\frac{1}{2}\kappa'(\rho)|\nabla\rho|^2\Big).
\end{equation*}
The coefficients $\nu=\nu(\rho)$ and $\mu=\mu(\rho)$ designate the bulk and shear viscosity, respectively, and are assumed to satisfy in the neighborhood of some reference constant density $\bar\rho>0$ the conditions
\begin{equation*}
\mu>0\qquad\text{and}\qquad \nu+\mu>0.
\end{equation*}
We shall assume that the functions $\lambda,\mu,\kappa$ and $P$ are real analytic in a neighborhood of $\bar\rho$. To simplify, we set $\bar{\rho}=1$.
Introducing $a=\rho-1$ and denoting by $\bar\mu=\mu(1)$, $\bar\nu=\nu(1)$, $\bar\kappa=\kappa(1)$, $\bar \alpha=P'(1)$, the system \eqref{eqNSK} reads
\begin{equation}\label{eqNSK1}
\begin{cases}
\partial_ta+\divergence(u)=\Tilde{f},\\
\partial_tu-\bar{\mathcal A}u+\bar\alpha\nabla a-\bar\kappa\nabla\Delta a=\Tilde{g},
\end{cases}
\end{equation}
where $\bar{\mathcal A}u=2\bar\mu \divergence(D_S(u))+\bar\nu\nabla \divergence u$, $\Tilde{f}=- \divergence(au)$, $\Tilde{g}=\sum_{i=1}^{4}\Tilde{g}_i$ with
\begin{equation*}
\begin{cases}
\Tilde{g}_1:=-u\cdot\nabla u,\\
\displaystyle \Tilde{g}_2:=(1+a)^{-1}{\mathcal A}u-\bar{\mathcal A}u,\\
\displaystyle \Tilde{g}_3:=-(1+a)^{-1}\nabla P(1+a)+\bar \alpha\nabla a,\\
\displaystyle \Tilde{g}_4:=\nabla\Big(\big(\kappa(1+a)-\bar\kappa\big)\Delta a+\frac12\kappa'(1+a)|\nabla a|^2\Big).
\end{cases}
\end{equation*}
The system \eqref{eqNSK1} is a hyperbolic/parabolic coupled system, which is common for compressible Navier-Stokes type systems. In contrast with the linearized equation of the classical compressible Navier-Stokes system, it was remarked by F. Charve, R. Danchin, and J. Xu that for the linear part of \eqref{eqNSK1}, with external forces, both of the density and velocity are smoothed out instantaneously (see lemma \ref{Estimation Duhamel NSK lemme}). In 2018, authors showed in \cite{GevreyanalyticityanddecayforthecompressibleNavier-Stokessystemwithcapillarity} a Gevrey analyticity smoothed effect for all the unknowns of the compressible Navier-Stokes-Korteweg system, in Besov spaces, this is the first related result for a model of compressible fluids. In this paper, we aim to establish this smoothing effect and to estimate the radius of analyticity of the solution, in the pseudo-measure spaces for a particular case presented in the following subsection. Using the method used by J. Y. Chemin, I. Gallagher, and P. Zhang in \cite{Ontheradiusofanalyticityofsolutiontosemi-linearparabolicsystems} for semi-linear parabolic systems, we give a better estimate on the radius of analyticity near 0, the advantage to work in the pseudo-measure spaces is that we obtained point-wise time-frequency estimate of the decay of the solution, with studying the turbulence as perspective. In the following subsection, we describe a special case of the compressible Navier-Stokes-Korteweg system, so-called the incompressible Navier-Stokes system with quantum pressure, that will be discussed in this paper.
\subsection{Compressible Navier-Stokes system with quantum pressure}
In this note, we consider a special case, which is the so-called compressible Navier-Stokes system with quantum pressure considered by B. Haspot \cite{GlobalstrangsolutionfortheKortewegsystemwithquantumpressureindimensionNgeq2}, where
$$(\mu(\rho),\nu(\rho),\kappa(\rho))=(\mu\rho,\nu\rho,\kappa/\rho),\quad P(\rho):=\alpha\rho,$$
and $\mu>0,\mu+\nu>0$, $\kappa>0$, $\alpha>0$ are constants.

Introducing $$\rho=\bar\rho e^a,$$
the system \eqref{eqNSK} reads
\begin{equation}
\label{NSK quantique}
\left \{
\begin{array}{lcr}

     \partial_t a + \divergence(u)=f(u,a),\\ 
      \partial_t u- \mu \Laplace{u}-(\mu+\nu)\nabla\divergence (u) +\alpha\nabla a-\kappa\nabla\Laplace{a}=g(u,a),
     
\end{array}
\right.
\end{equation}
where $g:=\sum_{j=1}^{3}g_{i}$ and  
\begin{equation}
\left \{
\begin{array}{lcr}
  f(u,a) :=-u\cdot \nabla a, \\
  g_1(u,u) :=-u\cdot\nabla u,\\
  g_2(u,a) :=\mu\nabla a\cdot\nabla u+(\mu+\nu)\nabla a\cdot Du,\\
 g_3(a,a) :=\frac{\kappa}{2}\nabla(\nabla a\cdot \nabla a).
\end{array}
\right.
\end{equation}
We consider the initial value condition
\begin{equation}
    \label{NSK quantique condition initial}
    (a,u)_{|_{t=0}}=(a_0,u_0).
\end{equation}
\subsection{Pseudo-measure spaces}
Let us begin by specifying some notations.
\begin{notation}
Throughout the paper, $f\lesssim_{a_1,\dots,a_k}g$ means that there exists a positive constant $C$, which depends on the parameters $a_1,\dots,a_k$ such that $f\leq C g$. We denote by $\widehat{f}$ the Fourier transform with respect to the space variable of the function $f\in\mathcal{C}\left([0,T[;\mathcal{S}'(\R^d)\right)$.
\end{notation}
We begin by define pseudo-measure spaces on the whole space $\R^d$.
For all $r\geq 0$, we define the pseudo-measure space of order $r$ by setting
$$
PM^r(\R^d):=\enstq{g\in\mathcal{S}'(\R^d)}{\widehat{g}\in L^{1}_{loc}(\R^d)\ \, \text{and}\ \, \|g\|_{PM^{r}}:=\sup_{\xi\in\R^d}\lbrace|\xi|^r|\widehat{g}(\xi)|\rbrace <+\infty}.
$$

The pseudo-measure spaces were firstly used for fluids mechanic systems by Y. Le Jan and A.A.S. Sznitman in \cite{CascadesaleatoiresetequationsdeNavier-Stokes} for the incompressible Navier-Stokes system, for existence results. After, the analytic regularity was studying by P. G. Lemarié-Rieusset in \cite{Uneremarque} and W. Deng, M. Paicu and P. Zhang in  \cite{RemarksonthedecayofFouriercoefficientstosolutionofNavier-Stokessystem} for the global mild solution of incompressible Navier-Stokes system. The introduction of pseudo-measure spaces is motivated by \cite{FrequencydecayforNavierStokesstationarysolutions}, related to the theory of turbulences (see also \cite{OntheKolmogorovDissipationLawinaDampedNavierStokesEquation} and \cite{BoundsonKolmogorovspectrafortheNavierStokesequations}). These spaces are particular case of homogeneous Besov spaces construct over the shift-invariant Banach space of distributions. Here, the so-called shift-invariant Banach space of distributions is the pseudo-measure space $PM^0$ (see \cite{RecentdevelopmentsintheNavier-Stokesproblem} for more details).

\subsection{Critical space}
We supposed that $d\geq 2$. 
Here, we want to investigate the existence and regularity for the Cauchy problem associated to \eqref{NSK quantique} in critical spaces, related to the invariance by scaling. The invariance by scaling is the main thread for finding some appropriate functional framework. Let us first recall the notion of scaling for the system \eqref{NSK quantique} (see \cite{ExistenceofsolutionsforcompressiblefluidmodelsofKortewegtype} or \cite{GlobalstrangsolutionfortheKortewegsystemwithquantumpressureindimensionNgeq2} ). If $(a,u)$ solves \eqref{NSK quantique}, then  so does $(a_{\lambda},u_{\lambda})$, where
$$
a_{\lambda}:=a(\lambda^2 \cdot,\lambda \cdot)\ \ \ \text{and}\ \ \ \ u_{\lambda}:=\lambda u(\lambda^2 \cdot,\lambda \cdot),
$$
and $\lambda\in\R^{*}$.  This observation leads to the notion of critical spaces. We say that a functional space is a \textit{critical space} for \eqref{NSK quantique} if for all positive real numbers $\lambda$, the associated norm is invariant under the transformation
$$
(a,u)\longmapsto (a_{\lambda},u_{\lambda}),
$$
up to a constant independent of $\lambda$.  This suggests to choose initial data $(a_0,u_0)$ in the space whose norm is invariant for all positive real number $\lambda$ by $(a_0,u_0)\mapsto (a_0(\lambda\cdot),\lambda u_0(\lambda\cdot))$. If we deal with a pseudo-measure space, a natural candidate is the space $PM^{d}\times PM^{d-1}$. According to the discussion of the critical spaces, the space-time functional space that we investigate in this paper is the following Kato space.
\begin{defi}
Let $p$, $r$ and $T$ be positive real numbers. We define the Kato space $K^{p,r}_{T}$ as the space of $u\in \mathcal{C}_{b}(]0,T];PM^{r+\frac{2}{p}})$ such that the quantity 
$$
\|u\|_{K^{p,r}_{T}}:=\sup_{t\in]0,T]}\{t^{\frac{1}{p}}\|u(t)\|_{PM^{r+\frac{2}{p}}}\},
$$
is finite. We also define the space $K^{p,r}_{\infty}$ of $u\in \mathcal{C}_{b}(]0,+\infty[;PM^{r+\frac{2}{p}})$ such that  
$$
\|u\|_{K^{p,r}_{\infty}}:=\sup_{t\in]0,+\infty[}\{t^{\frac{1}{p}}\|u(t)\|_{PM^{r+\frac{2}{p}}}\},
$$
is finite.
\end{defi}
We observe that the space $K^{p,d}_{\infty}\times K_{\infty}^{p,d-1}$ verifies the invariance by scaling. The Kato spaces is useful to establish Kato types theorems (see \cite{FourierAnalysisandNonlinearPartialDifferentialEquations} and \cite{Ontheradiusofanalyticityofsolutiontosemi-linearparabolicsystems}), such as theorem \ref{theoreme de type Kato pour pseudo-mesure dans l'espace entier}. We use this Kato spaces to establish global existence and regularity results.

\subsection{Radius of analyticity}
If $\Omega$ is an open subset of $\C^d$, we denote by $\mathcal{H}(\Omega)$ the set of holomorphic functions over $\Omega$.
Let $r<d$. If $u\in PM^r(\R^d)$, we define the radius of analyticity of $u$ by setting 
$$
\rad (u):=\sup \enstq{\sigma>0}{ e^{\sigma|D|}u\in PM^r(\R^d)}.
$$
If $u=(u_1,u_2,\dots,u_d)\in \left(PM^r(\R^d)\right)^d$ is a vector field, we define this radius by setting $\rad(u):=\min_{k\in[\![1,d]\!]}\{\rad(u_k)\}$.
For every $\sigma>0$, we denote by $S_{\sigma}$ the open connected set of all $z$ in  $\C^d$ such that $|\im(z)|<\sigma$. The following proposition justifies the denomination "radius of analyticity".
\begin{prop}
\label{Equivalence entre decroissance exponentielle et analycite}
Let $r<d$ and $\sigma>0$. Let $u$ be in $PM^{r}(\R^d)$. If $e^{\sigma|D|}u\in PM^{r}(\R^d)$, then $u$ extends to an unique holomorphic function $U$ in $\mathcal{H}(S_{\sigma})$.
\end{prop}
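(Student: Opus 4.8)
The plan is to realise the holomorphic extension explicitly as an inverse Fourier integral, and then to check in turn that it converges on $S_{\sigma}$, that it is holomorphic, that it agrees with $u$ on $\R^d$, and that it is unique. First I would translate the two hypotheses into a pointwise bound on $\widehat{u}$. Since $\widehat{e^{\sigma|D|}u}(\xi)=e^{\sigma|\xi|}\widehat{u}(\xi)$, setting $M:=\|e^{\sigma|D|}u\|_{PM^{r}}$ yields $|\widehat{u}(\xi)|\le M|\xi|^{-r}e^{-\sigma|\xi|}$ for almost every $\xi$. Because $r<d$ the factor $|\xi|^{-r}$ is integrable near the origin, while the exponential controls the behaviour at infinity; hence $\widehat{u}\in L^{1}(\R^d)$.

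I would then define the candidate $U(z):=(2\pi)^{-d}\int_{\R^d}e^{iz\cdot\xi}\widehat{u}(\xi)\,d\xi$ for $z\in\C^d$, with $z\cdot\xi=\sum_{k}z_k\xi_k$. For $z=x+iy\in S_{\sigma}$ one has $|e^{iz\cdot\xi}|=e^{-y\cdot\xi}\le e^{|\im(z)|\,|\xi|}$, so the integrand is dominated by $M|\xi|^{-r}e^{(|\im(z)|-\sigma)|\xi|}$, which is integrable precisely because $|\im(z)|<\sigma$ and $r<d$. Thus $U$ is well defined on $S_{\sigma}$, and on any compact $K\subset S_{\sigma}$, where $|\im(z)|\le\sigma-\delta$ for some $\delta>0$, the function $M|\xi|^{-r}e^{-\delta|\xi|}$ is a uniform integrable majorant, giving local uniform convergence and continuity of $U$.

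For the holomorphy I would argue variable by variable. Fixing all coordinates except $z_k$ and taking any closed triangle $\Gamma$ in the $z_k$-plane contained in $S_{\sigma}$, the uniform majorant justifies Fubini, so that $\oint_{\Gamma}U\,dz_k=\int_{\R^d}\big(\oint_{\Gamma}e^{iz\cdot\xi}\,dz_k\big)\widehat{u}(\xi)\,d\xi=0$, the inner contour integral vanishing since $z_k\mapsto e^{iz\cdot\xi}$ is entire. By Morera's theorem $U$ is separately holomorphic, and being continuous it is jointly holomorphic by Osgood's lemma, i.e. $U\in\mathcal{H}(S_{\sigma})$; equivalently one may differentiate under the integral sign, the extra factor $i\xi_k$ being absorbed by the exponential decay. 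Taking $\im(z)=0$, the restriction $U|_{\R^d}$ is exactly the inverse Fourier transform of the $L^{1}$ function $\widehat{u}$, hence equals $u$, so $U$ is indeed an extension of $u$.

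Uniqueness follows from the identity principle on the connected tube $S_{\sigma}$: if $V\in\mathcal{H}(S_{\sigma})$ also restricts to $u$ on $\R^d$, then $F:=U-V$ is holomorphic and vanishes on $\R^d$; since $\R^d$ is a maximally totally real submanifold, all complex derivatives of $F$ vanish at real points, so $F$ vanishes on a neighbourhood of $\R^d$ and therefore on all of $S_{\sigma}$. The step I expect to be most delicate is the passage to joint holomorphy in several complex variables together with the careful handling of the uniform integrable majorant, and in particular the role of the hypothesis $r<d$ in taming the singularity of $|\xi|^{-r}$ at the origin, without which neither $\widehat{u}\in L^{1}$ nor the convergence of the integral defining $U$ could be guaranteed.
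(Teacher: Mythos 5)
Your proposal is correct and follows essentially the same route as the paper: translate the hypothesis $e^{\sigma|D|}u\in PM^{r}$ into the pointwise bound $|\widehat{u}(\xi)|\lesssim |\xi|^{-r}e^{-\sigma|\xi|}$, use $r<d$ to get $\widehat{u}\in L^{1}$, define $U$ as the inverse Fourier integral on $S_{\sigma}$, and obtain holomorphy from the uniform integrable majorant on sub-strips. You actually supply more detail than the paper on the two points it leaves implicit (joint holomorphy via Morera--Osgood, and uniqueness via the identity principle on the connected tube), so nothing is missing.
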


This proposition means that we can express $u\in PM^r(\R^d)$, whose Fourier transform have an exponential decay, as the trace on $\R^d$ of a function which is holomorphic on some strip  $S_{\sigma}$.

\subsection{Main results}
We recall that $d\geq 2$. Let's assume that $p>2$ is such that $d-3+\frac{4}{p}>0$. This condition  ensures that nonlinear terms are well defined.
We introduce the space $X_{T}$ of $(a,u)\in (K_{T}^{p,d-1}\cap K_{T}^{p,d})\times K_{T}^{p,d-1}$, that we equip with the norm defined by 
$$
\|(a,u)\|_{X_T}:=\max\{\|a\|_{K_{T}^{p,d-1}},\|a\|_{K_{T}^{p,d}}\} +\|u\|_{K_{T}^{p,d-1}}.
$$
Using the language of mild solutions of the Navier-Stokes-Korteweg system, as in \cite{Gevreyregularityfornonlinearanalyticparabolicequations}, we prove the global existence and regularity of the solution to \eqref{NSK quantique} which we state as follows (summing up theorem \ref{**Existence global NSK pseudo-measure**} and theorem \ref{**Analyticite solution global de NSK pseudo-mesure**}).
\begin{theo}
\label{**main theorem 1**}
Given an initial data $(a_0,u_0)$ in $\left(PM^{d-1}\times PM^{d}\right)\times PM^{d-1}$. If $\|(a_0,|D|a_0,u_0)\|_{PM^{d-1}}$ is small enough, then the Cauchy problem \eqref{NSK quantique}-\eqref{NSK quantique condition initial} has a global solution $(a,u)$ in the space $X_{\infty}$ which space analytic at any positive time. Moreover, for any time $t>0$, we have
$$
\rad(a(t),u(t))\geq c_0\sqrt{t},
$$
for some positive constant $c_0$ which depends only on $\nu$, $\mu$, $\kappa$ and $\alpha$.
\end{theo}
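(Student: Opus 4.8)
The plan is to separate the statement into its two halves --- global existence in $X_\infty$ and the analytic smoothing $\rad(a(t),u(t))\geq c_0\sqrt t$ --- and to treat both through the mild formulation of \eqref{NSK quantique}. Denoting by $\mathbb{L}(t)$ the semigroup generated by the linear part of \eqref{NSK quantique}, I would recast the Cauchy problem as the fixed-point equation
\[
\begin{pmatrix}a\\u\end{pmatrix}(t)=\mathbb{L}(t)\begin{pmatrix}a_0\\u_0\end{pmatrix}+\int_0^t\mathbb{L}(t-s)\begin{pmatrix}f\\g\end{pmatrix}(a,u)(s)\,ds,
\]
and solve it by a Banach fixed-point argument in a small ball of $X_\infty$.

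For the existence part, the linear term is controlled by lemma \ref{Estimation Duhamel NSK lemme}: the special Korteweg structure, through the third-order coupling $-\kappa\nabla\Delta a$, forces both $a$ and $u$ to gain parabolic smoothing, so that $\mathbb{L}(t)(a_0,u_0)$ lands in $X_\infty$ with norm bounded by $\|(a_0,|D|a_0,u_0)\|_{PM^{d-1}}$. For the nonlinearities the essential tool is the pseudo-measure product law: since $\widehat{fg}=\widehat f*\widehat g$, one has $\|fg\|_{PM^{r_1+r_2-d}}\lesssim\|f\|_{PM^{r_1}}\|g\|_{PM^{r_2}}$ whenever $0<r_1,r_2<d$ and $r_1+r_2>d$, the constant arising from $\int_{\R^d}|\xi-\eta|^{-r_1}|\eta|^{-r_2}\,d\eta=C|\xi|^{d-r_1-r_2}$. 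Applying this to $f=-u\cdot\nabla a$, $g_1=-u\cdot\nabla u$, $g_2$, and $g_3=\tfrac\kappa2\nabla(\nabla a\cdot\nabla a)$ and keeping track of the $t^{1/p}$ weights estimates each (bilinear) nonlinearity in $X_\infty$; the hypothesis $d-3+\tfrac4p>0$ is precisely what keeps every exponent inside the admissible window of these product laws. A Kato-type argument (theorem \ref{theoreme de type Kato pour pseudo-mesure dans l'espace entier}) then produces a unique global solution in $X_\infty$ under the smallness assumption.

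For the analyticity I would follow the exponential-weight method of Chemin, Gallagher and Zhang: fix $\phi(t)=c_0\sqrt t$ and set $(a_\phi,u_\phi):=e^{\phi(t)|D|}(a,u)$, the goal being to show $(a_\phi,u_\phi)\in X_\infty$, which by Proposition \ref{Equivalence entre decroissance exponentielle et analycite} yields holomorphy on the strip $S_{c_0\sqrt t}$ and hence $\rad(a(t),u(t))\geq c_0\sqrt t$. Inserting the weight into the Duhamel formula, two estimates must close. First, the composite operator $e^{\phi(t)|D|}\mathbb{L}(t)$ maps $(a_0,u_0)$ into $X_\infty$ --- even though the data are not themselves analytic --- because the symbol of $\mathbb{L}(t)$ decays like $e^{-c_1 t|\xi|^2}$ (again by lemma \ref{Estimation Duhamel NSK lemme}), while Young's inequality $c_0\sqrt t\,|\xi|\leq\tfrac\varepsilon2 t|\xi|^2+\tfrac{c_0^2}{2\varepsilon}$ shows that the Gevrey factor $e^{c_0\sqrt t|\xi|}$ is swallowed by this Gaussian once $c_0$ is small relative to $c_1$. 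Second, the nonlinear terms reduce to the very same product laws: the subadditivity $e^{\phi|\xi|}\leq e^{\phi|\xi-\eta|}e^{\phi|\eta|}$ lets the weight distribute across each convolution, so $e^{\phi|D|}(fg)$ obeys the estimate above with $a,u$ replaced by $a_\phi,u_\phi$.

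The main obstacle is exactly the absorption step in the linear estimate, which depends entirely on having genuine quadratic-in-frequency decay $e^{-c_1 t|\xi|^2}$ for \emph{every} mode of the linearized flow. Since the density carries no dissipation of its own --- the $a$-equation is only first order --- this decay is not a scalar heat bound but a consequence of the spectral structure of the coupled system, in which the capillarity term regularizes both unknowns simultaneously; this is where the argument truly exploits the Korteweg coupling. Once lemma \ref{Estimation Duhamel NSK lemme} supplies this decay uniformly in low and high frequencies, the admissible $c_0$ is pinned by the rate $c_1$, hence by $\mu,\nu,\kappa$ and $\alpha$, and since the Gevrey factor contributes only the harmless constant $e^{c_0^2/(2\varepsilon)}$ the weighted fixed-point estimate closes exactly as in the existence step, completing the proof.
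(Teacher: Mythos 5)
Your proposal follows essentially the same route as the paper: a Banach fixed point for the mild formulation in a small ball of $X_\infty$, built on the parabolic decay of Lemma \ref{Estimation Duhamel NSK lemme} and the convolution/product law of Lemma \ref{Estimation de convolution pour 1/|.|^alpha et 1/|.|^beta Lemme} (with $d-3+\frac4p>0$ playing exactly the role you describe), followed by a second fixed point in the Gevrey-weighted space with weight $e^{c_0\sqrt{t}|D|}$ and the conclusion via Proposition \ref{Equivalence entre decroissance exponentielle et analycite}. The only point you gloss over is that in the Duhamel term the weight $e^{c_0\sqrt{t}|\xi|}$ at time $t$ must be traded for $e^{c_0\sqrt{s}|\xi-\eta|}e^{c_0\sqrt{s}|\eta|}$ at the earlier time $s<t$, so subadditivity alone does not close the bilinear estimate; the leftover factor $e^{c_0(\sqrt{t}-\sqrt{s})|\xi|}$ must be absorbed by part of the Gaussian $e^{-c(t-s)|\xi|^2}$ using $(\sqrt{t}-\sqrt{s})^2\le t-s$ --- the same Young-type absorption you already invoke for the linear term, and precisely the content of inequality \eqref{Ineguality convolution poids exponentiel} in the paper.
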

The first observation is that the lower bound of the radius of analyticity is similar to \cite{Gevreyregularityfornonlinearanalyticparabolicequations} in the case of Besov spaces. Moreover this regularity result holds for critical initial data. \\
In section 5, we investigate the instantaneous analytic smoothing effect of the system \eqref{NSK quantique}. The following theorem sums up two main results of section 5.
\begin{theo}
Let $\delta$ be in $]0,\frac{2}{p}]$. Let $(a_0,u_0)$ be in $\left(PM^{d-1+\delta}\cap PM^{d+\delta}\right)\times PM^{d-1+\delta}$ an initial data. There exist a positive time $T$ and an unique solution $(a,u)$ in $X_T$ to the Cauchy problem \eqref{NSK quantique}\eqref{NSK quantique condition initial}. Moreover, if $\delta=\frac{2}{p}$, we have
$$
\liminf_{t\rightarrow 0^{+}}\frac{\rad(a(t),u(t))}{\sqrt{t|\ln{(C_1 t)}|}}\geq C_2,
$$
for some positive constants $C_1$ and $C_2$.
\end{theo}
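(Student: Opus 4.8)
The plan is to separate the statement into a local well-posedness part, which is essentially a subcritical variant of the existence theory, and a quantitative analyticity part, which is the genuinely new point and where the method of Chemin, Gallagher and Zhang enters.

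\textbf{Local existence and uniqueness.} First I would write \eqref{NSK quantique} in mild (Duhamel) form as $(a,u)=\mathcal L(t)(a_0,u_0)+\mathcal B\big((a,u),(a,u)\big)$, where $\mathcal L$ is the linear propagator of the Korteweg system and $\mathcal B$ collects the bilinear contributions coming from $f$, $g_1$, $g_2$ and the (essentially bilinear) term $g_3$. The bilinear bounds controlling $\mathcal B$ in $X_T$ are precisely those already used for Theorem \ref{**main theorem 1**}, and their operator norm on $X_T$ does not depend on $T$. The only additional ingredient is that the free evolution of the slightly more regular data is small in $X_T$: since $(a_0,u_0)\in PM^{d-1+\delta}$ with $0<\delta\le\tfrac2p$, gaining the $\tfrac2p-\delta$ derivatives measured by $X_T$ through the parabolic smoothing of Lemma \ref{Estimation Duhamel NSK lemme} costs only $t^{(\delta-2/p)/2}$, so that the $X_T$-norm of $\mathcal L(t)(a_0,u_0)$ is $\lesssim T^{\delta/2}\|(a_0,u_0)\|$ and tends to $0$ with $T$. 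A standard fixed-point argument then gives, for $T$ small enough, a unique solution in $X_T$, with no smallness assumption on the data.

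\textbf{Analytic smoothing.} For $\delta=\tfrac2p$ I would apply a time-dependent Gevrey weight. Fix $\beta>0$ and a small $C_1>0$, set $\theta(t):=\sqrt{\beta\,t\,|\ln(C_1t)|}$, and aim to show that $e^{\theta(t)|D|}(a,u)$ remains bounded in $X_T$ for small $t$; Proposition \ref{Equivalence entre decroissance exponentielle et analycite} then yields $\rad(a(t),u(t))\ge\theta(t)$, hence the announced $\liminf$ with $C_2=\sqrt\beta$. Applying $e^{\theta(t)|D|}$ to the Duhamel formula, the linear term is estimated using the smoothing $e^{-ct|\xi|^2}$ furnished by Lemma \ref{Estimation Duhamel NSK lemme} for \emph{both} unknowns: as the data now sits in $PM^{d-1+2/p}$, one is reduced to bounding $\sup_{\xi}e^{\theta(t)|\xi|-ct|\xi|^2}=e^{\theta(t)^2/(4ct)}=(C_1t)^{-\beta/(4c)}$, and multiplying by the Kato weight $t^{1/p}$ keeps this bounded as $t\to0^+$ exactly when $\beta\le 4c/p$. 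This is precisely where the extra regularity $\delta=\tfrac2p$ is consumed and where the logarithmic gain over the $\sqrt t$ bound of Theorem \ref{**main theorem 1**} originates.

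For the nonlinear term I would use the subadditivity $e^{\theta(t)|\xi|}\le e^{\theta(t)|\xi-\eta|}\,e^{\theta(t)|\eta|}$, which distributes the Gevrey weight over the two factors of each bilinear term and thereby reduces the weighted nonlinear estimate to the unweighted bilinear estimates already established, now applied to the weighted unknowns $e^{\theta|D|}(a,u)$. After transferring the weight from time $t$ to time $s$, the residual multiplier is $e^{(\theta(t)-\theta(s))|\xi|-c(t-s)|\xi|^2}$, whose supremum in $\xi$ equals $e^{(\theta(t)-\theta(s))^2/(4c(t-s))}$. I expect this to be the main obstacle: because $\theta(t)^2=\beta\,t\,|\ln(C_1t)|$ grows slightly faster than linearly, $\theta$ just fails to be $\tfrac12$-Hölder, so this factor is not uniformly bounded and must be treated by splitting the time integral, using $\theta(t)-\theta(s)\approx\theta'(s)(t-s)$ when $t-s$ is small and the decay of the Kato weights together with the integrability of $s^{-1/p}$ on the range $s\lesssim t$ otherwise. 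Choosing $\beta$ slightly below $4c/p$ to leave room, the fixed point closes in the weighted space on a short interval, and passing to the limit $t\to0^+$ in the resulting bound produces the stated lower bound for the radius of analyticity.
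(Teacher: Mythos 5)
Your local existence argument is the same as the paper's (Theorem \ref{theoreme de type Kato pour pseudo-mesure dans l'espace entier}): the free evolution of $PM^{d-1+\delta}$ data is $O(T^{\delta/2})$ in $X_T$ by parabolic smoothing, the bilinear constants are $T$-independent, and a fixed point closes on a small ball. That part is fine.

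The analyticity part has a genuine gap, and it sits exactly where you place your main worry. With the weight $e^{\theta(t)|D|}$, $\theta(t)=\sqrt{\beta t|\ln(C_1t)|}$, the residual multiplier in the Duhamel term is controlled by
\begin{equation*}
\exp\Bigl(\tfrac{(\theta(t)-\theta(s))^2}{4c(t-s)}\Bigr),\qquad (\theta(t)-\theta(s))^2\le \beta\bigl(t|\ln C_1t|-s|\ln C_1s|\bigr)\le \beta(t-s)|\ln C_1 s|,
\end{equation*}
so the best one can say is that this factor is $\lesssim (C_1 s)^{-\beta/(4c)}$; your two-regime splitting gives the same kind of bound ($(C_1t)^{-\beta/(16c)}$ when $s\sim t$, $(C_1t)^{-\beta/(2c)}$ when $s\le t/2$) and does not remove it. After integrating in $s$ and multiplying by the Kato weight $t^{1/p}$, the weighted bilinear estimate acquires a constant of order $(C_1t)^{-\beta/(4c)}$, whose supremum over $t\in(0,T]$ is infinite. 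So the bilinear operator is not bounded on the weighted Kato space, and "choosing $\beta$ slightly below $4c/p$" does not help: that choice controls the \emph{linear} term (as you correctly compute) but not the quadratic one, and the contraction does not close. The local-in-time smallness you invoke is a smallness of the linear part, not of the blowing-up bilinear constant.

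The paper circumvents this with the Chemin--Gallagher--Zhang weight, which differs from yours in two essential ways. First, the exponential rate is \emph{linear in $t$} and tied to the final time, $\lambda\frac{t}{\sqrt T}|\xi|$, so that $\theta(t,\xi)=\theta(t-s,\xi)+\theta(s,\xi)$ exactly and the residual multiplier you are fighting simply never appears. Second, the weight carries a scalar damping $e^{-\frac{\lambda^2}{4(1-\varepsilon)c_0}\frac tT}$, chosen so that $\theta(t,\xi,\varepsilon)-c_0t|\xi|^2\le-\varepsilon c_0t|\xi|^2$ for all $\xi$ (Young's inequality), which absorbs the worst-case frequency once and for all; the price is a single divergent constant $e^{+\lambda_T^2/(4(1-\varepsilon)c_0)}$ in front of the quadratic term (coming from the failure of subadditivity of $\theta$ in $\xi$), and this is beaten by a bootstrap hypothesis asserting that $\|(\underline a,\underline u)\|_{X_T}$ is itself small like $e^{-\lambda_T^2/(4(1-\varepsilon)c_0)}$. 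Taking $\lambda_T^2=\frac{4(1-\varepsilon)}{p}|\ln(\cdot)|$ then yields the radius $\lambda_T\sqrt T$ at time $T$ and the stated $\liminf$ after $\varepsilon\to0$. Without either the exact time-additivity or the damping-plus-bootstrap mechanism, your scheme cannot produce the logarithmic gain; I would recommend replacing your weight by the $T$-dependent linear one and adding the bootstrap on the smallness of the weighted solution.
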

The main interest of this theorem is the amelioration of the improvement radius of analyticity near $0$, proposed by F. Charve, R. Danchin and J. Xu in \cite{Gevreyregularityfornonlinearanalyticparabolicequations}. This result adapts to our framework the new method of J.-Y. Chemin, I. Gallagher and P. Zhang in \cite{Ontheradiusofanalyticityofsolutiontosemi-linearparabolicsystems} to estimate the radius of analyticity near $0$ of the solution to semi-linear parabolic system.
Note compared with theorem \ref{**main theorem 1**}, that this theorem contains a local in time existence and uniqueness result for supercritical initial data and holds for arbitrary large initial data. Additionally, we remark that the constant $C_1$ and the existence time interval, depend on the norm of the initial data (see theorem \ref{**Estimation du rayon d'analyticite pres de 0**} and theorem \ref{theoreme de type Kato pour pseudo-mesure dans l'espace entier}). 
\section{The linearized system}
\subsection{Parabolic estimate for the linearized system}
In this section we investigate the linearized system around $(u,a)=(0,0)$. This system reads
\begin{equation}
\label{NSKL quantique }
\left \{
\begin{array}{lcr}

     \partial_t a + \divergence(u)=F,\\ 
      \partial_t u- \mu \Laplace{u}-(\mu+\nu)\nabla\divergence (u) +\alpha\nabla a-\kappa\nabla\Laplace{a}=G,
     
\end{array}
\right.
\end{equation}
where $F$ and $G$ are externals forces assumed to be, smooth enough. 
For all $\xi\in\R^d$, we define $(d+1)\times(d+1)$-matrix

$$
A(\xi):=\begin{pmatrix}
0 &i\xi_1&\dots&\dots &\dots& i\xi_d\\
i(\alpha\xi_1+\kappa\xi_1|\xi|^2) &\mu|\xi|^2+(\mu+\nu)\xi_{1}^{2}&& &&(\mu+\nu)\xi_{1}\xi_d\\ \vdots&\vdots&\ddots&&\revdots&\vdots\\
\vdots&\vdots &&\croixdots&& \vdots\\ \vdots&\vdots&\revdots&&\ddots&\vdots\\
i(\alpha\xi_d+\kappa\xi_d|\xi|^2)&(\mu+\nu)\xi_{d}\xi_1&\dots&\dots &\dots&\mu|\xi|^2+(\mu+\nu)\xi_{d}^{2} 
\end{pmatrix}
$$

The matrix-valued symbol $A$ is the symbol of the space derivative operator of the linearized system. 
For all $t\geq 0$, we define
$$
W(t):=e^{tA(D)}.
$$
The family of Fourier multipliers $(W(t))_{t\geq 0}$ is the semi-group of the linearized system.
The key point of our study of the Navier-Stokes-Korteweg system with a quantum pressure, is a point-wise estimate of the semi-group $(W(t))_{t\geq 0}$ (that can be found in \cite{GevreyanalyticityanddecayforthecompressibleNavier-Stokessystemwithcapillarity} and \cite{ExistenceofsolutionsforcompressiblefluidmodelsofKortewegtype}). More precisely, we observe that the linear part of system \eqref{NSK quantique} has a parabolic behavior.
\begin{lem}
\label{Estimation Duhamel NSK lemme}
There exists a positive constant $c_0$, depending only on $(\kappa,\mu)$, such that the following inequality holds for all $\xi\in\R^d$ and $t\geq 0$:
\begin{equation}
    \label{Estimation Duhamel NSK}
    |(\widehat{a},|\xi|\widehat{a},\widehat{u})|(t,\xi)\lesssim_{\kappa} e^{-c_0t|\xi|^2}|(\widehat{a},|\xi|\widehat{a},\widehat{u})|(0,\xi)+\int_{0}^{t}e^{-c_0|\xi|^2(t-\tau)}|(\widehat{f},|\xi|\widehat{F},\widehat{G})|(\tau,\xi)|d\tau.
\end{equation}
\end{lem}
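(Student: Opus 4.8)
The plan is to work entirely on the Fourier side, where the linearized system becomes, for each frozen frequency $\xi$, the linear ODE
\[
\partial_t\begin{pmatrix}\widehat a\\ \widehat u\end{pmatrix}+A(\xi)\begin{pmatrix}\widehat a\\ \widehat u\end{pmatrix}=\begin{pmatrix}\widehat F\\ \widehat G\end{pmatrix},
\]
whose solution is given by Duhamel's formula $V(t)=W(t)V(0)+\int_0^t W(t-\tau)(\widehat F,\widehat G)(\tau)\,d\tau$, with $W(t)=e^{-tA(\xi)}$ the solution operator attached to the spatial operator on the left-hand side of the system. Since the weight $|\xi|$ is a constant at fixed $\xi$, multiplying the $\widehat a$-line by $|\xi|$ produces the triple $(\widehat a,|\xi|\widehat a,\widehat u)$ and the forcing $(\widehat F,|\xi|\widehat F,\widehat G)$, so that \eqref{Estimation Duhamel NSK} follows \emph{once} I prove the pointwise semigroup bound $\|W(t)\|\lesssim_\kappa e^{-c_0t|\xi|^2}$ in the norm attached to this triple, after which one takes moduli inside the Duhamel integral.

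First I would block-diagonalise $A(\xi)$ using the splitting $\R^d=\R\xi\oplus\xi^{\perp}$ of the velocity variable. On $\xi^{\perp}$ (the divergence-free directions) the tensor $(\mu+\nu)\xi\otimes\xi$ vanishes and $A(\xi)$ reduces to $\mu|\xi|^2\,\mathrm{Id}_{d-1}$, so these $d-1$ components solve a pure heat equation and decay like $e^{-\mu|\xi|^2 t}$. The density $\widehat a$ couples only to the longitudinal velocity $\ell:=\tfrac{\xi}{|\xi|}\cdot\widehat u$, yielding the $2\times2$ block
\[
\widetilde M(\xi)=\begin{pmatrix}0 & i|\xi|\\ i(\alpha+\kappa|\xi|^2)|\xi| & (2\mu+\nu)|\xi|^2\end{pmatrix},
\]
with characteristic polynomial $\lambda^2-(2\mu+\nu)|\xi|^2\lambda+(\alpha+\kappa|\xi|^2)|\xi|^2$. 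Its roots obey $\lambda_++\lambda_-=(2\mu+\nu)|\xi|^2>0$ and $\lambda_+\lambda_-=(\alpha+\kappa|\xi|^2)|\xi|^2>0$, from which I would read off a uniform spectral gap $\operatorname{Re}\lambda_{\pm}\geq c_0|\xi|^2$: at low frequencies the discriminant is negative and $\operatorname{Re}\lambda_\pm=\tfrac12(2\mu+\nu)|\xi|^2$, while at high frequencies both roots are real, positive and comparable to $|\xi|^2$ (the smaller being $\sim\tfrac{2\kappa}{(2\mu+\nu)+\sqrt{(2\mu+\nu)^2-4\kappa}}\,|\xi|^2$).

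The key estimate is then the pointwise control of $e^{-t\widetilde M(\xi)}$. Using the explicit formula $e^{-t\widetilde M}=\tfrac{\lambda_+e^{-\lambda_-t}-\lambda_-e^{-\lambda_+t}}{\lambda_+-\lambda_-}\,\mathrm{Id}-\tfrac{e^{-\lambda_-t}-e^{-\lambda_+t}}{\lambda_+-\lambda_-}\,\widetilde M$ when the roots are distinct, the spectral gap gives $\|e^{-t\widetilde M}\|\lesssim e^{-c_0|\xi|^2 t}$ in the weighted variables $(|\xi|\widehat a,\ell)$, in which the off-diagonal entries of the block, $i|\xi|^2$ and $i(\alpha+\kappa|\xi|^2)$, become comparable at high frequency; this is precisely why the statement propagates $|\xi|\widehat a$ together with $\widehat a$. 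Combining with the transverse heat decay $e^{-\mu|\xi|^2 t}$ and taking $c_0$ to be the minimum of the two constants yields $\|W(t)\|\lesssim_\kappa e^{-c_0|\xi|^2 t}$, and Duhamel finishes the argument.

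The main obstacle is making this matrix-exponential bound \emph{uniform} in $\xi$ across the frequency where the discriminant vanishes and the two eigenvalues collide: there $\widetilde M$ fails to be diagonalisable, the quotient $\tfrac{e^{-\lambda_-t}-e^{-\lambda_+t}}{\lambda_+-\lambda_-}$ degenerates into $t\,e^{-\lambda t}$, and the diagonalising change of basis becomes singular. The point is that this can only happen on a fixed shell $|\xi|=|\xi_0|$ bounded away from $0$ and $\infty$, where the repeated eigenvalue still satisfies $\lambda\gtrsim|\xi_0|^2$; the elementary inequality $t\,e^{-\lambda t}\lesssim e^{-c_0|\xi|^2 t}$, valid because $\lambda-c_0|\xi|^2$ stays positive and bounded below near $|\xi_0|$, absorbs the linear prefactor with a constant depending only on the coefficients. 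A robust alternative, should one prefer to avoid eigenvalue asymptotics, is a Fourier-space Lyapunov argument on the block: introduce an energy equivalent to $(1+\alpha+\kappa|\xi|^2)|\xi|^2|\widehat a|^2+|\ell|^2$ augmented by a small cross term $\operatorname{Re}(\widehat a\,\overline{\ell})$ to capture the dissipation of the non-parabolic density mode, and establish $\tfrac{d}{dt}\mathcal E+c_0|\xi|^2\mathcal E\lesssim(\text{forcing})$ before closing by Grönwall.
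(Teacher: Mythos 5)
The paper offers no proof of this lemma (it is imported from the cited works on the Korteweg system), so your argument must stand on its own. Its architecture is the right one: split $\widehat u$ into transverse and longitudinal parts, note the pure heat decay on $\xi^{\perp}$, reduce to the $2\times2$ block in $(\widehat a,\ell)$, and control its exponential via the eigenvalues. Your characteristic polynomial, the spectral gap $\operatorname{Re}\lambda_{\pm}\gtrsim|\xi|^2$, and the treatment of the eigenvalue collision on a fixed frequency shell are all correct.

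The gap is the central claim that ``the spectral gap gives $\|e^{-t\widetilde M}\|\lesssim e^{-c_0t|\xi|^2}$ in the weighted variables $(|\xi|\widehat a,\ell)$.'' A spectral gap does not bound a matrix exponential without control of the conditioning of the eigenbasis, and in the weighted variables this fails at low frequency. Concretely, with $b=|\xi|\widehat a$ the block becomes $\bigl(\begin{smallmatrix}0&i|\xi|^2\\ i(\alpha+\kappa|\xi|^2)&(2\mu+\nu)|\xi|^2\end{smallmatrix}\bigr)$; in the complex-root regime $\lambda_{\pm}=\tfrac{2\mu+\nu}{2}|\xi|^2\pm i\omega$ with $\omega\sim\sqrt{\alpha}\,|\xi|$ as $\xi\to0$, and your own interpolation formula gives for the $(2,1)$ entry of the weighted exponential the value $-i(\alpha+\kappa|\xi|^2)\,e^{-\frac{2\mu+\nu}{2}|\xi|^2t}\,\tfrac{\sin(\omega t)}{\omega}$, whose modulus at $t\sim\omega^{-1}$ is of order $\sqrt{\alpha}/|\xi|$ while $e^{-c_0t|\xi|^2}\sim1$ there. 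So the weighted pair alone does not obey the claimed bound. The estimate is saved precisely because the lemma propagates the redundant triple $(\widehat a,|\xi|\widehat a,\widehat u)$: at low frequency the coupling of $\ell(t)$ to the density must be paid with the unweighted $\widehat a(0)$ (the corresponding entry $(\alpha+\kappa|\xi|^2)|\xi|\tfrac{\sin(\omega t)}{\omega}$ is $\lesssim\sqrt{\alpha}$), and only at high frequency with $|\xi|\widehat a(0)$. You attribute the need for carrying both components solely to high-frequency comparability of the off-diagonal entries; without the complementary low-frequency check, entry by entry of the semigroup acting on the triple, the key step as written is false. The same defect touches your Lyapunov alternative: the functional $(1+\alpha+\kappa|\xi|^2)|\xi|^2|\widehat a|^2+|\ell|^2$ is not equivalent to the norm of the triple as $\xi\to0$ (it loses the unweighted $|\widehat a|^2$), so Gr\"onwall on it would not deliver the stated decay for $\widehat a$ itself.
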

This lemma gives a "parabolic decay"  of Fourier modes, in order to obtain the analytic regularisation. This "transfer of parabolicity" is a remarkable property of Korteweg typs model for compressible fluids. 
\section{Global existence}
\subsection{Nonlinear estimates}
In this section we will establish some bilinear estimate, which will be used to control the nonlinear terms of system \eqref{NSK quantique}. We begin by an elementary lemma where we investigate a convolution inequality.
\begin{lem}
\label{Estimation de convolution pour 1/|.|^alpha et 1/|.|^beta Lemme}
Let $d\geq 2$. Let $\alpha$ and $\beta$ be two real numbers such that $\alpha<d$, $\beta<d$ and $\alpha+\beta>d$. Then, for all $\xi\in\R^d$,
\begin{equation}
    \label{Estimation de convolution pour 1/|.|^alpha et 1/|.|^beta}
    \int_{\R^d}\frac{1}{|\xi-\eta|^{\alpha}}\frac{1}{|\eta|^{\beta}}d\eta\lesssim_{\alpha,\beta,d}\frac{1}{|\xi|^{\alpha+\beta-d}}.
\end{equation}
\end{lem}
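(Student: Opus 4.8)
The plan is to exploit the homogeneity of the integrand to reduce the statement to the finiteness of a single integral, and then to control that integral by splitting $\R^d$ into regions adapted to the two singularities and to the behaviour at infinity.

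First I would normalize by scaling. Writing $I(\xi) := \int_{\R^d} |\xi-\eta|^{-\alpha}|\eta|^{-\beta}\, d\eta$ and performing the change of variables $\eta = |\xi|\zeta$ gives $I(\xi) = |\xi|^{d-\alpha-\beta}\, I(\xi/|\xi|)$. Since the $L^1$ norm of the integrand is invariant under a simultaneous rotation of $\xi$ and $\eta$, the quantity $I(\xi/|\xi|)$ equals the constant $I(e_1)$, independent of the direction of $\xi$. Thus $I(\xi) = |\xi|^{-(\alpha+\beta-d)}\, I(e_1)$, and the claimed bound follows \emph{provided} $I(e_1)<\infty$. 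This reduces everything to proving convergence of the single integral $\int_{\R^d}|e_1-\zeta|^{-\alpha}|\zeta|^{-\beta}\,d\zeta$.

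To prove this finiteness I would split $\R^d$ into four pieces. On $\{|\zeta|\le\tfrac12\}$ one has $|e_1-\zeta|\ge\tfrac12$, so the integrand is $\lesssim|\zeta|^{-\beta}$, which is integrable near the origin precisely because $\beta<d$. Symmetrically, on $\{|\zeta-e_1|\le\tfrac12\}$ one has $|\zeta|\ge\tfrac12$, and integrability of $|\zeta-e_1|^{-\alpha}$ near $e_1$ uses $\alpha<d$. On $\{|\zeta|\ge 2\}$ one has $|e_1-\zeta|\ge|\zeta|-1\ge|\zeta|/2$, so the integrand is $\lesssim|\zeta|^{-(\alpha+\beta)}$, integrable at infinity exactly because $\alpha+\beta>d$. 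The remaining set $\{\tfrac12\le|\zeta|\le 2\}\setminus\{|\zeta-e_1|\le\tfrac12\}$ is bounded and keeps both $|\zeta|$ and $|\zeta-e_1|$ away from zero, so the integrand is bounded there and the integral is trivially finite. Adding the four contributions yields $I(e_1)<\infty$.

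The argument is essentially routine; there is no deep obstacle, only the bookkeeping of checking that the four regions cover $\R^d$ and of matching each of the three hypotheses with exactly one source of (non)integrability. The one point worth stressing is that the hypotheses are sharp: dropping $\beta<d$, $\alpha<d$, or $\alpha+\beta>d$ makes the integral diverge near $0$, near $e_1$, or at infinity respectively, which explains why all three appear in the statement. Alternatively, one may avoid the scaling step and argue directly by decomposing the $\eta$-integral into $\{|\eta|\le|\xi|/2\}$, $\{|\xi-\eta|\le|\xi|/2\}$, $\{|\xi|/2\le|\eta|\le 2|\xi|\}$ and $\{|\eta|\ge 2|\xi|\}$, estimating on each region and summing; this produces the same power $|\xi|^{d-\alpha-\beta}$ under the same three conditions.
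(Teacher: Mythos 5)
Your argument is correct, and it takes a genuinely different route from the paper. The paper argues directly in the variable $\eta$: it splits $\R^d$ into $B(\xi,|\xi|/2)$, $B(0,|\xi|/2)$ and the complement of their union (the latter further cut into $B(0,3|\xi|/2)\setminus X_\xi$ and $\R^d\setminus B(0,3|\xi|/2)$), and on each piece it produces the power $|\xi|^{d-\alpha-\beta}$ by explicit polar-coordinate computations. You instead use the homogeneity $I(\lambda\xi)=\lambda^{d-\alpha-\beta}I(\xi)$ together with rotational invariance to reduce the whole lemma to the single qualitative statement $I(e_1)<\infty$, which you then verify by the same kind of four-region split (your concluding remark about decomposing in $\eta$ directly is essentially the paper's proof). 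The scaling reduction buys conceptual economy: the exponent $\alpha+\beta-d$ appears for free, and each of the three hypotheses is matched transparently with one source of divergence, which also gives your sharpness remark at no extra cost. The paper's direct computation avoids invoking rotational invariance and keeps the dependence of the implicit constant on $\alpha,\beta,d$ visible at every step, but the two proofs are of comparable length and both are complete. One cosmetic point: when $\alpha\le 0$ or $\beta\le 0$ the corresponding factor is not singular at all, so the "integrability near $0$ or near $e_1$" language should be read as "local integrability", which holds trivially in that case; this does not affect the validity of your argument.
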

This estimate will be useful when we consider the product of two functions in pseudo-measure spaces.
\begin{proof}
If $\xi=0$, it is classical that the left-hand side of \eqref{Estimation de convolution pour 1/|.|^alpha et 1/|.|^beta} is infinite as the right-hand side this inequality. Supposed that $\xi\neq 0$. We set 
\begin{align*}
\int_{\R^d}\frac{1}{|\xi-\eta|^{\alpha}}\frac{1}{|\eta|^{\beta}}d\eta=\underbrace{\int_{B(\xi,|\xi|/2)}\frac{1}{|\xi-\eta|^{\alpha}}\frac{1}{|\eta|^{\beta}}d\eta}_{=:I_1} + \underbrace{\int_{B(0,|\xi|/2)}\frac{1}{|\xi-\eta|^{\alpha}}\frac{1}{|\eta|^{\beta}}d\eta}_{=:I_2}\\ + \underbrace{\int_{\R^d\backslash X_{\xi}}\frac{1}{|\xi-\eta|^{\alpha}}\frac{1}{|\eta|^{\beta}}d\eta}_{=:I_3}.
\end{align*}
where  $X_{\xi}:=B(0,|\xi|/2)\cup B(\xi,\frac{|\xi|}{2})$
We only need to estimate $I_1$, $I_2$ and $I_3$. For the  first one, let use begin by remarking that, if $|\xi-\eta|\leq \frac{|\xi|}{2}$, then, using the inverse triangular inequality, we have $|\eta|\geq\frac{|\xi|}{2}$. Therefore, we get
\begin{equation}
    \label{inegalité (2) sur I1}
    I_1\leq\int_{B(\xi,|\xi|/2)}\frac{1}{|\xi-\eta|^{\alpha}}d\eta\left(\frac{2}{|\xi|}\right)^{\beta}.
\end{equation}
We aim to estimate the first factor to the right hand side of \eqref{inegalité (2) sur I1}. Using the change of variables $\zeta\mapsto\zeta+\xi$, we get
$$
\int_{B(\xi,|\xi|/2)}\frac{1}{|\xi-\eta|^{\alpha}}d\eta=\int_{B(0,|\xi|/2)}\frac{1}{|\zeta|^{\alpha}}d\zeta.
$$
Considering the hypothesis $\alpha<d$, we get, using polar coordinates
\begin{equation}
    \label{inégalité (3) sur I1}
    \int_{B(0,|\xi|/2)}\frac{1}{|\zeta|^{\alpha}}d\zeta\lesssim_{\alpha,\beta,d}
\frac{1}{|\xi|^{\alpha-d}}.
\end{equation}
Using \eqref{inégalité (3) sur I1} to estimate the first factor of the right of \eqref{inegalité (2) sur I1}, we obtain
$$
I_1\lesssim_{\alpha,\beta,d}\frac{1}{|\xi|^{\alpha+\beta-d}}.
$$
Observing that $|\eta|\leq\frac{|\xi|}{2}$ implies $|\xi-\eta|\geq\frac{|\xi|}{2}$ and taking into account that $\beta<d$, from the inverse triangular inequality and using polar coordinates, we get as the same way
\begin{align*}
    I_2\lesssim_{\alpha,\beta,d}\int_{B(0,|\xi|/2)}\frac{1}{|\eta|^{\beta}}d\eta\left(\frac{2}{|\xi|}\right)^{\alpha}\lesssim\frac{1}{|\xi|^{\alpha+\beta-d}}.
\end{align*}
We decompose the last term, namely $I_3$, in two part 
$$
I_3=\int_{B(0,3|\xi|/2)\backslash X_{\xi}}\frac{1}{|\xi-\eta|^{\alpha}|\eta|^{\beta}}d\eta+\int_{\R^d\backslash B(0,\frac{3|\xi|}{2})}\frac{1}{|\xi-\eta|^{\alpha}|\eta|^{\beta}}d\eta.
$$For the first one, we have
$$
\int_{B(0,3|\xi|/2)\backslash X_{\xi}}\frac{1}{|\xi-\eta|^{\alpha}|\eta|^{\beta}}d\eta\lesssim_{\alpha,\beta,d}\frac{1}{|\xi|^{\alpha+\beta}}\int_{B(0,\frac{3|\xi|}{2})}d\eta\lesssim_{\alpha,\beta,d}\frac{1}{|\xi|^{\alpha+\beta-d}}.
$$
Observing that, if $\eta\in \R^d\backslash X_{\xi}$, then $|\xi-\eta|\geq\frac{|\eta|}{2}$ and using polar coordinates and the hypothesis $\alpha+\beta>d$, we obtain 
\begin{align*}
    \int_{\R^d\backslash B(0,3|\xi|/2)}\frac{1}{|\xi-\eta|^{\alpha}|\eta|^{\beta}}d\eta\lesssim_{\alpha,\beta,d}\int_{\R^d\backslash B(0,3|\xi|/2)}\frac{1}{|\eta|^{\alpha+\beta}}d\eta\lesssim_{\alpha,\beta,d}\frac{1}{|\xi|^{\alpha+\beta-d}},
\end{align*}
that concludes the proof.
\end{proof}
The lemma above give a point-wise estimate for the decay rate of the convolution, which is the base for considering products in the pseudo-measure spaces. As a consequence of lemma \ref{Estimation de convolution pour 1/|.|^alpha et 1/|.|^beta Lemme}, we get following bilinear estimates.
\begin{lem}
\label{Estimation de la norme de Kato-pseudomesure des nonlin }
Let $a$ and $b$ two homogeneous Fourier multipliers of degree 1. Let $\delta>0$ and let $p>2$. Then, there exists a positive constant $C_b$, that depends on $\delta$, $p$, $d$ and $b$ such that, for every $u$ and $v$ in $K^{p,d-1}_{T}$, we have
\begin{equation}
\label{produit beta}
\|\int_{0}^{t}e^{\delta(t-s)\Laplace}\beta(u,v)(s)ds\|_{K^{p,d-1}_{T}}\leq C_{b}  \|u\|_{K^{p,d-1}_{T}}\|v\|_{K^{p,d-1}_{T}},
\end{equation}
where $\beta(u,v):=b(D)(u\cdot v)$.
 If $p$ additionally satisfies $d-3+\frac{4}{p}>0$, then there exists a positive constant $C_a$, that depends of $\delta$, $p$, $d$ and $a$ such that, for every $u$ and $v$ in $K^{p,d-1}_{T}$, we have
 \begin{equation}
\label{produit alpha}
\|\int_{0}^{t}e^{\delta(t-s)\Laplace}\alpha(u,v)(s)ds\|_{K^{p,d-1}_{T}}\leq  C_{a} \|u\|_{K^{p,d-1}_{T}}\|v\|_{K^{p,d-1}_{T}},
\end{equation}
where $\alpha (u,v):=u\cdot a(D)v$.

\end{lem}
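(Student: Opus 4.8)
The plan is to reduce everything to a point-wise frequency estimate on the convolution, using the $PM$-norm and the heat kernel decay, and then to absorb the singularity of the multiplier via Lemma \ref{Estimation de convolution pour 1/|.|^alpha et 1/|.|^beta Lemme}. I would work entirely on the Fourier side. Fix $t\in]0,T]$; the $K^{p,d-1}_T$-norm of the Duhamel term is $\sup_t t^{1/p}\|\cdot(t)\|_{PM^{d-1+2/p}}$, so I must bound $|\xi|^{d-1+2/p}$ times the modulus of the Fourier transform of $\int_0^t e^{\delta(t-s)\Laplace}\beta(u,v)(s)\,ds$. After taking the Fourier transform, the heat semigroup becomes the scalar factor $e^{-\delta(t-s)|\xi|^2}$, the multiplier $b(D)$ becomes multiplication by $b(\xi)$ with $|b(\xi)|\lesssim_b|\xi|$ by homogeneity of degree $1$, and the product $u\cdot v$ becomes the convolution $\widehat u(s,\cdot)*\widehat v(s,\cdot)$ evaluated at $\xi$.

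\emph{First step:} pass to the convolution bound. Writing $|\widehat u(s,\eta)|\le s^{-1/p}\|u\|_{K^{p,d-1}_T}\,|\eta|^{-(d-1+2/p)}$ and similarly for $v$ with argument $\xi-\eta$, I obtain a convolution of two homogeneous weights of orders $\alpha=\beta=d-1+2/p$. To apply Lemma \ref{Estimation de convolution pour 1/|.|^alpha et 1/|.|^beta Lemme} I need $\alpha,\beta<d$ and $\alpha+\beta>d$: the first holds since $p>2$ forces $2/p<1$, and the second reduces to $d-2+4/p>0$, which is guaranteed (the hypothesis $d-3+4/p>0$ is even stronger, and is exactly what I will use for the $\alpha(u,v)$ case). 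The lemma then yields a factor $|\xi|^{-(2(d-1+2/p)-d)}=|\xi|^{-(d-2+4/p)}$. Combined with the factor $|b(\xi)|\lesssim|\xi|$ and the target weight $|\xi|^{d-1+2/p}$, the powers of $|\xi|$ telescope to $|\xi|^{1-2/p}\cdot|\xi|^{d-1+2/p}\cdot|\xi|^{-(d-2+4/p)}=|\xi|^{2-4/p}$, which is precisely the growth that the heat factor must tame.

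\emph{Second step:} perform the time integral. The remaining quantity is $\int_0^t e^{-\delta(t-s)|\xi|^2}|\xi|^{2-4/p}\,s^{-2/p}\,ds$, up to the constant $\|u\|\,\|v\|$. Substituting $\sigma=s/t$ and using $|\xi|^2\ge$ nothing in particular, I estimate $\int_0^t e^{-\delta(t-s)|\xi|^2}|\xi|^{2-4/p}s^{-2/p}\,ds$ by pulling out $t^{1-2/p}$ after the scaling; the key is the classical fact that $\sup_{\xi}\int_0^t e^{-\delta(t-s)|\xi|^2}|\xi|^{2-4/p}\,s^{-2/p}\,ds\lesssim_{\delta,p,d} t^{-1/p}$, which follows from the change of variable $w=\delta(t-s)|\xi|^2$ together with the convergence of the Beta-type integral $\int_0^1(1-\sigma)^{-(1-2/p)}\sigma^{-2/p}\,d\sigma$; this integral converges exactly because $1-2/p<1$ and $2/p<1$, i.e. again because $p>2$. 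Multiplying by the outer $t^{1/p}$ from the $K^{p,d-1}_T$-norm gives a bound uniform in $t$, hence \eqref{produit beta}. The estimate \eqref{produit alpha} is handled identically: here the derivative lands on a single factor, so I bound $|\widehat{a(D)v}(s,\eta)|\lesssim_a|\eta|\,|\widehat v(s,\eta)|$, which shifts one of the convolution orders up by $1$ to $d+2/p$; the condition $\alpha+\beta>d$ becomes $d-1+4/p>0$ and $\alpha<d$ becomes $2/p<1$, while the surviving homogeneity after Lemma \ref{Estimation de convolution pour 1/|.|^alpha et 1/|.|^beta Lemme} is $|\xi|^{-(d-1+4/p)}$, giving the same net power $|\xi|^{2-4/p}$ and the same time integral, now requiring $d-3+4/p>0$ to keep the order $d+2/p-1$ below $d$ in the relevant sub-integral.

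\emph{Main obstacle.} The delicate point is the time integral in the second step: I must check that the singularities at both endpoints $s=0$ (from $s^{-2/p}$) and $s=t$ (from the heat factor against the power $|\xi|^{2-4/p}$) are integrable and that the resulting bound scales exactly like $t^{-1/p}$, with a constant independent of $\xi$. The scaling is forced by the critical choice of weights, so any miscount in the exponents would break the uniformity in $\xi$; the integrability at $s=t$ is exactly the borderline case governed by $p>2$. Everything else is a bookkeeping of homogeneities feeding into the convolution lemma, whose hypotheses I have checked coincide with the standing assumptions $p>2$ and $d-3+4/p>0$.
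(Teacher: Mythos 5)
Your overall strategy is exactly the paper's: work on the Fourier side, bound the convolution by Lemma \ref{Estimation de convolution pour 1/|.|^alpha et 1/|.|^beta Lemme} applied to the homogeneous weights coming from the $K^{p,d-1}_T$-norms, then absorb the leftover power of $|\xi|$ into the heat factor and reduce the time integral to a Beta function. However, the exponent bookkeeping contains two errors that make key steps fail as written. First, in the $\beta$ case the multiplier contributes $|b(\xi)|\lesssim|\xi|^{1}$, not $|\xi|^{1-2/p}$; the surviving power is therefore $|\xi|^{1+(d-1+\frac 2p)-(d-2+\frac 4p)}=|\xi|^{2-\frac 2p}$, not $|\xi|^{2-\frac 4p}$. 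This matters: by scaling, $\sup_{\xi}\int_0^t e^{-\delta(t-s)|\xi|^2}|\xi|^{2-\frac 4p}s^{-\frac 2p}\,ds$ is a constant independent of $t$, so your claimed bound by $t^{-1/p}$ is false (or only holds with a $T$-dependent constant, which ruins the case $T=\infty$ needed for the global existence theorem). With the correct power $2-\frac 2p$ one writes $|\xi|^{2-\frac2p}=\bigl(\delta(t-s)|\xi|^2\bigr)^{1-\frac1p}\delta^{-(1-\frac1p)}(t-s)^{-(1-\frac1p)}$ and gets $\int_0^t(t-s)^{-(1-\frac1p)}s^{-\frac2p}\,ds=t^{-\frac1p}B(\tfrac1p,1-\tfrac2p)$, which is exactly the $t^{-1/p}$ you need; so the conclusion is right but your stated intermediate inequality is not.

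Second, and more seriously, in the $\alpha$ case the factor $|\eta|\,|\widehat v(s,\eta)|\le s^{-1/p}\|v\|_{K^{p,d-1}_T}|\eta|^{-(d-2+\frac2p)}$ \emph{lowers} the order of the singularity to $d-2+\frac2p$; you instead raise it to $d+\frac2p$. With an exponent $\ge d$ the convolution lemma does not apply (it requires both exponents strictly below $d$), and indeed $\int_{B(0,|\xi|/2)}|\eta|^{-(d+\frac2p)}\,d\eta$ diverges, so that sub-integral in the lemma's proof is infinite. The correct pair of exponents is $\bigl(d-1+\frac2p,\,d-2+\frac2p\bigr)$, whose sum exceeds $d$ precisely when $d-3+\frac4p>0$ — this is where the extra hypothesis actually enters, not in the condition you wrote ($d-1+\frac4p>0$, which is vacuous). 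Once these two exponent slips are corrected, your argument coincides with the paper's proof and goes through.
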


\begin{proof}
By applying the lemma \ref{Estimation de convolution pour 1/|.|^alpha et 1/|.|^beta Lemme} with $\alpha=d-1+\frac{2}{p}$ and $\beta=d-2+\frac{2}{p}$
\begin{align*}
    |\int_{0}^{t} e^{-\delta(t-s)|\xi|^2}\widehat{\alpha(u,v)}(s,\xi)ds|
& \lesssim\int_{0}^{t}e^{-\delta(t-s)|\xi|^2}(|\widehat{u}|\star|\widehat{a(D)v}|)(s,\xi)ds,\\
& \lesssim\int_{\R^d}\int_{0}^{t}e^{-\delta(t-s)|\xi|^2}|\widehat{u}(s,\xi-\eta)||\eta||\widehat{v}(s,\eta)|dsd\eta,\\
&\lesssim \left(\int_{\R^d}\frac{d\eta}{|\xi-\eta|^{d-1 +\frac{2}{p}}|\eta|^{d-2+\frac{2}{p}}}\right)\int_{0}^{t}\frac{e^{-\delta(t-s)|\xi|^2}}{s^{\frac{2}{p}}}ds\\
&\ \ \ \ \ \  \times \|u\|_{K^{p,d-1}_{t}}\|v\|_{K^{p,d-1}_{t}},\\
&\lesssim \frac{1}{|\xi|^{d-3+\frac{4}{p}}}\left(\int_{0}^{t}\frac{e^{-\delta(t-s)|\xi|^2}}{s^{\frac{2}{p}}}ds\right)\|u\|_{K^{p,d-1}_{T}}\|v\|_{K^{p,d-1}_{T}}.
\end{align*}

Using the lemma \ref{Estimation de convolution pour 1/|.|^alpha et 1/|.|^beta Lemme} with $\alpha=\beta=d-1+\frac{2}{p}$, we obtain by the same approach

\begin{align*}
    |\int_{0}^{t} e^{-\delta(t-s)|\xi|^2}\widehat{\beta(u,v)}(s,\xi)ds|
& \lesssim\int_{0}^{t}e^{-\delta(t-s)|\xi|^2}|b(\xi)|(|\widehat{u}|\star|\widehat{v}|)(s,\xi)ds,\\
& \lesssim|\xi|\int_{\R^d}\int_{0}^{t}e^{-\delta(t-s)|\xi|^2}|\widehat{u}(s,\xi-\eta)||\widehat{v}(s,\eta)|dsd\eta,\\
&\lesssim \left(|\xi|\int_{\R^d}\frac{d\eta}{|\xi-\eta|^{d-1 +\frac{2}{p}}|\eta|^{d-1+\frac{2}{p}}}\right)\int_{0}^{t}\frac{e^{-\delta(t-s)|\xi|^2}}{s^{\frac{2}{p}}}ds\\
&\ \ \ \ \ \ \ \times\|u\|_{K^{p,d-1}_{T}}\|v\|_{K^{p,d-1}_{T}},\\
&\lesssim \frac{1}{|\xi|^{d-3+\frac{4}{p}}}\left(\int_{0}^{t}\frac{e^{-\delta(t-s)|\xi|^2}}{s^{\frac{2}{p}}}ds\right)\|u\|_{K^{p,d-1}_{T}}\|v\|_{K^{p,d-1}_{T}}.
\end{align*}

Finally, using that the function $y\in \R_{+}\mapsto e^{-\delta y}y^{1-\frac{1}{p}}$ is bounded and the change of variable $\sigma\mapsto t\sigma$ to make appear (taking into account the hypothesis $p>2$) the beta function, we have
\begin{align*}
 \frac{1}{|\xi|^{d-3+\frac{4}{p}}}\left(\int_{0}^{t}\frac{e^{-\delta(t-s)|\xi|^2}}{s^{\frac{2}{p}}}ds\right),
&=\frac{1}{|\xi|^{d-1+\frac{2}{p}}}\left(\int_{0}^{t}\frac{e^{-c_0(t-s)|\xi|^2}(\delta(t-s)|\xi|^2)^{1-\frac{1}{p}}}{\delta^{1-\frac{1}{p}}(t-s)^{1-\frac{1}{p}}s^{\frac{2}{p}}}ds\right),\\
&\lesssim \frac{1}{\delta^{1-\frac{1}{p}}|\xi|^{d-1+\frac{2}{p}}}\left(\int_{0}^{t}\frac{1}{(t-s)^{1-\frac{1}{p}}s^{\frac{2}{p}}}ds\right),\\
&\lesssim\frac{t^{-\frac{1}{p}}}{\delta^{1-\frac{1}{p}}|\xi|^{d-1+\frac{2}{p}}}\left(\int_{0}^{1}\frac{1}{(1-\sigma)^{1-\frac{1}{p}}\sigma^{\frac{2}{p}}}ds\right),\\
&\lesssim \frac{t^{-\frac{1}{p}}}{\delta^{1-\frac{1}{p}}|\xi|^{d-1+\frac{2}{p}}}.
\end{align*}
This concludes the proof of the lemma.
\end{proof}
For the remainder of this paper, we supposed that $d\geq 2$ and $p>2$ is such that $d-3+\frac{4}{p}>0$.
\subsection{Global existence theorem}
In this section we study the global existence of the solution to system \eqref{NSK quantique} for critical initial data. The main result of this section is the following theorem.
\begin{theo}
\label{**Existence global NSK pseudo-measure**}
There exists $\rho>0$ and $R>0$ such that, for every $(a_0,u_0)$ in $(PM^{d-1}\cap PM^{d})\times PM^{d-1}$ satisfying
$$
\|(a_0,|D|a_{0},u_{0})\|_{PM^{d-1}}\leq \rho,
$$
there exist an unique solution $(a,u)\in X_{\infty}$ of the Cauchy problem \eqref{NSK quantique}\eqref{NSK quantique condition initial}, such that
$$
\|(a,u)\|_{X_{\infty}}\leq R.
$$
\end{theo}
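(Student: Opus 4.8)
The plan is to recast the Cauchy problem \eqref{NSK quantique}-\eqref{NSK quantique condition initial} in mild form and to solve it by a fixed point scheme. Writing $W(t)=e^{tA(D)}$ for the semi-group of the linearized system, a mild solution satisfies
$$
(a,u) = W(\cdot)(a_0,u_0) + \mathcal B\big((a,u),(a,u)\big),
$$
where $\mathcal B(V,\tilde V)(t)=\int_0^t W(t-s)\,\mathcal N(V,\tilde V)(s)\,ds$ collects the Duhamel contributions of the nonlinear terms $f$, $g_1$, $g_2$, $g_3$. The key simplification is that I never need the detailed structure of $W$: by the ``transfer of parabolicity'' of Lemma \ref{Estimation Duhamel NSK lemme}, the Fourier modes $|\widehat a|$, $|\xi||\widehat a|$ and $|\widehat u|$ of any solution are dominated by the scalar factor $e^{-c_0|\xi|^2 t}$ acting on the initial data, plus a Duhamel convolution of the same factor against $|\widehat f|$, $|\xi||\widehat f|$ and $|\widehat g|$. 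This is exactly the structure in which the bilinear estimates of Lemma \ref{Estimation de la norme de Kato-pseudomesure des nonlin } are phrased (with $\delta=c_0$), so the whole problem is brought back to estimates for the damped heat semi-group.

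For the linear part, I would bound $\|W(\cdot)(a_0,u_0)\|_{X_\infty}$ directly from Lemma \ref{Estimation Duhamel NSK lemme} with vanishing forcing. Dominating $|\widehat a|$, $|\xi||\widehat a|$ and $|\widehat u|$ by $e^{-c_0t|\xi|^2}$ times the corresponding initial Fourier data, and inserting the definitions of $\|a\|_{K^{p,d-1}}$, $\|a\|_{K^{p,d}}$ and $\|u\|_{K^{p,d-1}}$, the frequency weights collapse into expressions of the form $(t|\xi|^2)^{1/p}e^{-c_0 t|\xi|^2}$. Since $y\mapsto y^{1/p}e^{-c_0 y}$ is bounded on $\R_+$, this yields $\|W(\cdot)(a_0,u_0)\|_{X_\infty}\lesssim \|(a_0,|D|a_0,u_0)\|_{PM^{d-1}}$; the presence of $|D|a_0$ is precisely what feeds the $K^{p,d}$ component of the density.

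The heart of the argument is the bilinear bound $\|\mathcal B(V,\tilde V)\|_{X_\infty}\lesssim \|V\|_{X_\infty}\|\tilde V\|_{X_\infty}$, obtained by matching each nonlinearity to one of the two model forms of Lemma \ref{Estimation de la norme de Kato-pseudomesure des nonlin }. The convection term $g_1=-u\cdot\nabla u$ and the $K^{p,d-1}$ part of $f=-u\cdot\nabla a$ are of type $\alpha(u,v)=u\cdot a(D)v$ with both entries in $K^{p,d-1}$. The terms $g_2=\mu\nabla a\cdot\nabla u+(\mu+\nu)\nabla a\cdot Du$ and $g_3=\tfrac\kappa2\nabla(\nabla a\cdot\nabla a)$ involve the density only through $\nabla a$, which lies in $K^{p,d-1}$ exactly because $a\in K^{p,d}$; writing $g_2$ as $\alpha(\nabla a,u)$ and $g_3$ as $\beta(\nabla a,\nabla a)$ with $b(D)=\nabla$, these are controlled by $\|a\|_{K^{p,d}}\|u\|_{K^{p,d-1}}$ and $\|a\|_{K^{p,d}}^2$ respectively. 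The one remaining piece is the $K^{p,d}$ estimate of the density forced by $|\xi|\widehat f$: here I would rerun the convolution estimate of Lemma \ref{Estimation de convolution pour 1/|.|^alpha et 1/|.|^beta Lemme} with $\alpha=\beta=d-1+\tfrac2p$, using $a\in K^{p,d}$ for one factor, which converges precisely under the standing hypothesis $d-3+\tfrac4p>0$ (equivalently $\alpha+\beta>d$) and again gives a bound $\lesssim\|u\|_{K^{p,d-1}}\|a\|_{K^{p,d}}$. Summing the four contributions yields the desired quadratic estimate.

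With the linear bound and the bilinear bound in hand, I would conclude by the classical fixed point lemma for maps of the form $x\mapsto x_0+\mathcal B(x,x)$ on a Banach space: if $\|x_0\|_{X_\infty}\leq C\rho$ and $\mathcal B$ is bounded with constant $C_{\mathcal B}$, then for $\rho$ small enough that $4C_{\mathcal B}C\rho<1$ there is a unique solution in the closed ball of radius $R\simeq 2C\rho$, giving simultaneously existence, uniqueness in $X_\infty$ and the bound $\|(a,u)\|_{X_\infty}\leq R$; continuity in time with values in the relevant pseudo-measure spaces is inherited from the uniformly convergent Picard iterates. I expect the main obstacle to be the control of the higher $K^{p,d}$ regularity of the density, in particular the capillary term $g_3$ and the $|\xi|\widehat f$ contribution, since these are the places where one must simultaneously exploit the parabolic smoothing of both unknowns coming from the Korteweg structure (Lemma \ref{Estimation Duhamel NSK lemme}) and the sharp convergence of the convolution integrals at the threshold $d-3+\tfrac4p>0$.
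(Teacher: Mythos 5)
Your proposal follows essentially the same route as the paper's proof: the mild formulation with $W(t)$, the linear bound obtained from Lemma \ref{Estimation Duhamel NSK lemme} together with the boundedness of $y\mapsto y^{1/p}e^{-c_0y}$, the matching of $f$, $|D|f$, $g_1$, $g_2$, $g_3$ to the two model bilinear forms of Lemma \ref{Estimation de la norme de Kato-pseudomesure des nonlin }, and the Banach fixed point on a small ball of $X_\infty$ with $\rho\simeq R$ chosen so that the map is a contraction. The only (harmless) imprecision is the parenthetical for the $|D|f$ term: with $\alpha=\beta=d-1+\tfrac2p$ the convergence condition $\alpha+\beta>d$ of Lemma \ref{Estimation de convolution pour 1/|.|^alpha et 1/|.|^beta Lemme} reads $d-2+\tfrac4p>0$, which is weaker than the standing hypothesis $d-3+\tfrac4p>0$ (the latter is what the $\alpha$-type terms actually require), so the estimate still goes through.
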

The proof is based on the Banach fixed-point theorem.
\begin{proof}
First observe that for all $v\in PM^{d-1}$, since the function $y\in\R_{+}\mapsto e^{-c_0y}y^{\frac{1}{p}}$ is bounded by $1$ (because $\frac{1}{p}<1$), we have
$$
e^{-c_0t|\xi|^2}t^{\frac{1}{p}}|\widehat{v}(\xi)||\xi|^{d-1+\frac{2}{p}}=\frac{e^{-c_0t|\xi|^2}(c_0t|\xi|^2)^{\frac{1}{p}}|\widehat{v}(\xi)||\xi|^{d-1}}{c_{0}^{\frac{1}{p}}}\leq\frac{1}{c_{0}^{\frac{1}{p}}}\|v\|_{PM^{d-1}},
$$
hence
\begin{equation}
\label{truc de fin}
 \|e^{c_0t\Laplace}v\|_{K_{\infty}^{p,d-1}}\leq \frac{1}{c_{0}^{\frac{1}{p}}}\|v\|_{PM^{d-1}}.
\end{equation}
It follows from lemma \ref{Estimation Duhamel NSK lemme} and \eqref{truc de fin} that, for all $(a_0,u_0)\in (PM^{d-1}\cap PM^{d})\times PM^{d-1}$, we have
$$
W(\cdot)(a_0,u_0)\in K_{\infty}^{p,d-1}
$$
and
\begin{equation}
\label{Inegualité sur le semigroupe de NSKl dans les pseudomesures }
\|W(t)(a_0,u_0)\|_{X_{\infty}}\leq \Tilde{C}\|(a_0,|D|a_{0},u_0)\|_{PM^{d-1}}.
\end{equation}
where $\Tilde{C}$ is a positive constant, that depends only on $c_0$, $p$ and $d$.
Combining lemma \ref{Estimation Duhamel NSK lemme} and the lemma \ref{Estimation de la norme de Kato-pseudomesure des nonlin }, we get the following estimates : for every $a$ and $b$ in $K_{T}^{p,d-1}\cap K_{T}^{p,d}$ and for all $u$ and $v$ in $K_{T}^{p,d-1}$, 
\begin{align}   
    \|\int_{0}^{t}W(t-s)f(u,a)(s)ds\|_{K_{\infty}^{p,d-1}} & \leq C_f\|u\|_{K_{\infty}^{p,d-1}}\|a\|_{K_{\infty}^{p,d-1}},\label{Estimation semigroupe NSKl pour f}\\
    \|\int_{0}^{t}W(t-s)|D|f(u,a)(s)ds\|_{K_{\infty}^{p,d-1}} & \leq \Tilde{C_{f}}\|u\|_{K_{\infty}^{p,d-1}}\|a\|_{K_{\infty}^{p,d}},\label{Estimation semigroupe NSKl pour |D|f}\\   
    \|\int_{0}^{t}W(t-s)g_1(u,v)(s)ds\|_{K_{\infty}^{p,d-1}} & \leq C_{g_1}\|u\|_{K_{\infty}^{p,d-1}}\|v\|_{K_{\infty}^{p,d-1}},\label{Estimation semigroupe NSKl pour g1}\\  
    \|\int_{0}^{t}W(t-s)g_2(u,a)(s)ds\|_{K_{\infty}^{p,d-1}} & \leq C_{g_2}\|u\|_{K_{\infty}^{p,d-1}}\|a\|_{K_{\infty}^{p,d}},\label{Estimation semigroupe NSKl pour g2}\\
    \|\int_{0}^{t}W(t-s)g_3(a,b)(s)ds\|_{K_{\infty}^{p,d-1}} & \leq C_{g_3}\|a\|_{K_{T}^{p,d}}\|b\|_{K_{\infty}^{p,d}},\label{Estimation semigroupe NSKl pour g3}
\end{align}
where positive constants $C_f$, $\Tilde{C}_f$, $C_{g_1}$, $C_{g_2}$ and $C_{g_3}$ only depends of $d$, $p$ and $c_0$.
For any positive real numbers $R$, we denote by $B(0,R)$ the ball of center 0 and radius $R$ in $X_T$.
$$
\begin{array}{rcl}
\Phi: X_{\infty} &\to& X_{\infty}\\
 (a,u) &\mapsto & W(\cdot)(a_0,u_0)+\int_{0}^{\cdot}W(\cdot-s)(f(u,a)(s),g(u,a)(s))ds,
\end{array}
$$ 
where $(a_0,u_0)\in (PM^{d-1}\cap PM^{d})\times K^{d-1}$ are the initial data. 
The goal is to prove the existence of a fixed point for $\Phi$. We assume that 
\begin{equation}
    \label{Petitesse de la norme de (a0,|D|a0,u0) pour NSK}
    \|(a_0,|D|a_0,u_0)\|_{PM^{d-1}}<\rho,
\end{equation}
for $\rho>0$, small enough, which we will be fixed later.
We begin by proving that for some radius $R>0$, small enough, the ball $B(0,R)$ is stable by $\Phi$. If $(a,u)$ is in $B(0,R)$, then, we deduce from \eqref{Inegualité sur le semigroupe de NSKl dans les pseudomesures }, \eqref{Estimation semigroupe NSKl pour f}, \eqref{Estimation semigroupe NSKl pour |D|f}, \eqref{Estimation semigroupe NSKl pour g1}, \eqref{Estimation semigroupe NSKl pour g2}, \eqref{Estimation semigroupe NSKl pour g3} and \eqref{Petitesse de la norme de (a0,|D|a0,u0) pour NSK} that
\begin{equation}
\label{ majoration norme de Phi(a,u) }
    \|\Phi(a,u)\|_{X_{\infty}}\leq 5K_{\Phi}R^2.
\end{equation}
where $K_{\Phi}:=\max\{C_{f},\Tilde{C}_f, C_{g_1}, C_{g_2}, C_{g_3}\}$. Now, we assume that $R$ satisfies 
\begin{equation}
    \label{stabilité de la boul par Phi}
    5K_{\Phi}R<\frac{1}{2},
\end{equation}
and we set
\begin{equation}
    \label{formule pour epsilon}
    \rho:=\frac{R}{2\Tilde{C}}.
\end{equation}
For $R>0$ satisfying \eqref{stabilité de la boul par Phi} and for this choice of $\rho$, using \eqref{ majoration norme de Phi(a,u) } and \eqref{Inegualité sur le semigroupe de NSKl dans les pseudomesures }, we get for all $(a,u)$ in $B(0,R)$
\[
\|\Phi(a,u)\|_{X_{\infty}}<R,
\]
which means that $B(0,R)$ is stable by $\Phi$.
Let $(a,u)$ and $(b,v)$ be in $X_{\infty}$. We have
\begin{align*}
    f(u,a)-f(v,b) & = f(u,a-b)+ f(u-v,b),\\
    g_{1}(u,u)-g_{1}(v,v) & = g_{1}(u-v,u)+g_{1}(v,u-v),\\
    g_{2}(u,a)-g_{2}(v,b) & = g_{2}(u,a-b)+g_{2}(u-v,b),\\
    g_{3}(a,a)-g_{3}(b,b) & = g_{3}(a-b,a)+g_{3}(b,a-b).
\end{align*}
Thus , applying \eqref{Estimation semigroupe NSKl pour f}, \eqref{Estimation semigroupe NSKl pour |D|f}, \eqref{Estimation semigroupe NSKl pour g1}, \eqref{Estimation semigroupe NSKl pour g2} and \eqref{Estimation semigroupe NSKl pour g3}, we deduce, from the triangular inequality, that 

\begin{align*}
    \|\Phi(a,u)-\Phi(b,v)\|_{X_{\infty}} & \leq K_{\Phi}(  \|u-v\|_{X_{\infty}}(\|b\|_{X_{\infty}}+\|u\|_{X_{\infty}}+\|v\|_{X_{\infty}}+2\||D|b\|_{X_{\infty}})\\
    & +\|a-b\|_{X_{\infty}}\|u\|_{X_{\infty}}\\ & +\||D|(a-b)\|_{X_{\infty}}(2\|u\|_{X_{\infty}}+\||D|a\|_{X_{\infty}}+\||D|b\|_{X_{\infty}}))\\
    & \leq 8K_{\Phi}\|(a,u)-(b,v)\|_{X_{\infty}}(\|(a,u)\|_{X_{\infty}}+\|(b,v)\|_{X_{\infty}}).
\end{align*}

Now, if we take $(a,u)$ and $(b,v)$ in the ball $B(0,R)$, from previous inequalities, it follows that
$$
\|\Phi(a,u)-\Phi(b,v)\|_{X_{\infty}}\leq 16K_{\Phi}R\|(a,u)-(b,v)\|_{X_{\infty}}.
$$
However, from \eqref{stabilité de la boul par Phi}, we get
\begin{equation}
    \label{condition de petitesse sur R pour contraction}
    16RK_{\Phi}<1.
\end{equation}
 Now, assume that $R$ satisfies \eqref{condition de petitesse sur R pour contraction}. Since,  \eqref{condition de petitesse sur R pour contraction} implies \eqref{stabilité de la boul par Phi} , then, for $\rho$ given by \eqref{formule pour epsilon}, $\Phi$ is a strict contractive map of $B(0,R)$ into itself. We conclude with Banach fixed-point theorem (see \cite{FunctionalAnalysisSobolevSpacesandPartialDifferentialEquations}, theorem 5.7).
\end{proof}
As a by-product of the proof of this theorem, we obtain the following result.
\begin{coro}
Let $T>0$. Let $(a_0,u_0)$ be in $(PM^{d-1}\cap PM^{d})\times PM^{d-1}$.There exists two positive constants $C_1$ and $C_2$ that depends only of $\mu$,$\nu$, $p$ and $d$ such that, for all solutions $(a,u)$ of \eqref{NSK quantique} in $X_T$, we have
$$
\|(a,u)\|_{X_{T}}\leq C_1\|(a_0,|D|a_0,u_0)\|_{PM^{d-1}}+C_2\|(a,u)\|_{X_T}^{2}.
$$
\end{coro}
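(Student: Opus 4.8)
The plan is to extract the bound directly from the fixed-point machinery already developed in the proof of Theorem \ref{**Existence global NSK pseudo-measure**}, rather than to redo the contraction argument. The corollary is a statement about \emph{any} solution $(a,u)\in X_T$ of \eqref{NSK quantique}, not just the small fixed-point solution, so the key observation is that every such solution satisfies the Duhamel (integral) formulation
$$
(a,u)=W(\cdot)(a_0,u_0)+\int_0^{\cdot}W(\cdot-s)\big(f(u,a)(s),g(u,a)(s)\big)\,ds,
$$
on $[0,T]$. First I would justify this integral representation: apply Duhamel's formula to the linear system \eqref{NSKL quantique } with external forces $F=f(u,a)$, $G=g(u,a)$, which is legitimate since $(a,u)\in X_T$ makes the nonlinear terms well defined (this is exactly the role of the hypothesis $d-3+\tfrac4p>0$). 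Taking the $X_T$-norm and using the triangle inequality then splits the estimate into the linear contribution and the nonlinear contribution.

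For the linear contribution I would invoke the semigroup bound \eqref{Inegualité sur le semigroupe de NSKl dans les pseudomesures }, which gives
$$
\|W(\cdot)(a_0,u_0)\|_{X_T}\leq \Tilde{C}\,\|(a_0,|D|a_0,u_0)\|_{PM^{d-1}},
$$
with $\Tilde{C}$ depending only on $c_0,p,d$; this is the term producing $C_1$. For the nonlinear contribution I would reuse the five bilinear estimates \eqref{Estimation semigroupe NSKl pour f}--\eqref{Estimation semigroupe NSKl pour g3}, noting that each is stated on $K_\infty^{p,d-1}$ but the proof of Lemma \ref{Estimation de la norme de Kato-pseudomesure des nonlin } is valid verbatim on $K_T^{p,d-1}$ for finite $T$ (the beta-function computation there is performed on $[0,T]$). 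Summing these five estimates and bounding every occurring factor by $\|(a,u)\|_{X_T}$ — exactly as in the derivation of \eqref{ majoration norme de Phi(a,u) } — yields
$$
\Big\|\int_0^{\cdot}W(\cdot-s)\big(f(u,a),g(u,a)\big)(s)\,ds\Big\|_{X_T}\leq 5K_\Phi\,\|(a,u)\|_{X_T}^2,
$$
with $K_\Phi=\max\{C_f,\Tilde C_f,C_{g_1},C_{g_2},C_{g_3}\}$. Setting $C_1=\Tilde C$ and $C_2=5K_\Phi$ gives the claimed inequality, and these constants depend only on $\mu,\nu,p,d$ through $c_0$.

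The main obstacle I anticipate is bookkeeping rather than analysis: one must verify that the bilinear estimates, which were quoted for the infinite-horizon spaces $K_\infty^{p,r}$, transfer cleanly to the finite-time spaces $K_T^{p,r}$, and that each of the five nonlinear terms $f,g_1,g_2,g_3$ is controlled by a product of two factors each dominated by $\|(a,u)\|_{X_T}$ (paying attention that $f$ and $g_2$ require the $K_T^{p,d}$-norm of $a$ via the $|D|a$ component, which is precisely what the $X_T$-norm encodes through $\max\{\|a\|_{K_T^{p,d-1}},\|a\|_{K_T^{p,d}}\}$). Once this matching of norms is checked, the estimate is an immediate consequence of the already-established linear and bilinear bounds, and no new fixed-point or contraction argument is needed.
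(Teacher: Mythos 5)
Your proposal is correct and follows essentially the same route as the paper, which presents this corollary precisely as a by-product of the fixed-point proof of Theorem \ref{**Existence global NSK pseudo-measure**}: Duhamel formulation, the linear semigroup bound \eqref{Inegualité sur le semigroupe de NSKl dans les pseudomesures } for $C_1$, and the five bilinear estimates \eqref{Estimation semigroupe NSKl pour f}--\eqref{Estimation semigroupe NSKl pour g3} summed to give $C_2$. The bookkeeping points you flag (transfer from $K^{p,r}_{\infty}$ to $K^{p,r}_{T}$ and the role of the $K^{p,d}_{T}$-component of the $X_T$-norm) are exactly the checks the paper leaves implicit.
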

\section{Analyticity for global solution}
The purpose of this section is to prove the analyticity of some global solution constructed in the previous section. Furthermore, we give an lower bound of the radius of analyticity. This result holds for small enough critical initial data. We investigate later the case of supercritical data.
\begin{theo}
\label{**Analyticite solution global de NSK pseudo-mesure**}
Let $\rho$ and $R$ as in theorem \ref{**Existence global NSK pseudo-measure**}. For every $(a_0,u_0)$ in the space $(PM^{d-1}\cap PM^{d})\times PM^{d-1}$ such that
$$
\|(a_0,|D|a_0,u_0)\|_{PM^{d-1}}\leq \frac{\rho}{2e^{2c_0}},
$$
the solution of the Cauchy problem \eqref{NSK quantique}-\eqref{NSK quantique condition initial} given by the theorem \ref{**Existence global NSK pseudo-measure**} is analytic in space for every time $t>0$ with a radius of analyticity bounded below by $c_0\sqrt{t}$.
\end{theo}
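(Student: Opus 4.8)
The plan is to show that the global solution given by Theorem~\ref{**Existence global NSK pseudo-measure**}, once multiplied by the Gevrey weight $e^{c_0\sqrt t\,|D|}$, still lies in $X_\infty$; Proposition~\ref{Equivalence entre decroissance exponentielle et analycite} then immediately yields holomorphy on the strip $S_{c_0\sqrt t}$ and hence $\rad(a(t),u(t))\geq c_0\sqrt t$. Writing $\phi(t):=c_0\sqrt t$, I would introduce the weighted unknowns $v:=e^{\phi(t)|D|}a$ and $w:=e^{\phi(t)|D|}u$, and note that since $e^{\phi(t)|D|}$ commutes with every Fourier multiplier, $|D|v=e^{\phi(t)|D|}|D|a$; it is therefore enough to bound $(v,w)$ in $X_\infty$ uniformly in $t$.

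The argument rests on two elementary absorption inequalities, each obtained from the maximum of a quadratic. First, with $y=\sqrt t\,|\xi|$ one has $\phi(t)|\xi|-c_0 t|\xi|^2=c_0 y-c_0 y^2\leq c_0/4$, so the weight is harmless against the linear parabolic factor $e^{-c_0 t|\xi|^2}$ of Lemma~\ref{Estimation Duhamel NSK lemme}. Second, in the Duhamel term the weight lives at the running time $t$ while the nonlinearity is evaluated at $s\leq t$; using the subadditivity $|\xi|\leq|\xi-\eta|+|\eta|$ I would split $e^{\phi(t)|\xi|}\leq e^{\phi(s)|\xi-\eta|}e^{\phi(s)|\eta|}e^{(\phi(t)-\phi(s))|\xi|}$, which exactly reconstitutes the weighted factors $e^{\phi(s)|D|}$ on the two bilinear arguments and leaves a residual $e^{(\phi(t)-\phi(s))|\xi|}$. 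The crucial point is then the bound $\sqrt t-\sqrt s\leq\sqrt{t-s}$, which gives $(\phi(t)-\phi(s))|\xi|-c_0(t-s)|\xi|^2\leq c_0 z-c_0 z^2$ with $z=\sqrt{t-s}\,|\xi|$, whence $e^{(\phi(t)-\phi(s))|\xi|}e^{-c_0(t-s)|\xi|^2}\leq e^{c_0/2}e^{-\frac{c_0}{2}(t-s)|\xi|^2}$. In other words, the analyticity radius grows exactly slowly enough to be dominated by the parabolic smoothing on $[s,t]$, at the cost of halving the decay rate and of a fixed multiplicative constant.

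With these inequalities in hand I would rerun the fixed-point scheme of Theorem~\ref{**Existence global NSK pseudo-measure**} for the conjugated map
$$
\Phi_\phi(v,w):=e^{\phi(t)|D|}W(t)(a_0,u_0)+\int_0^t e^{\phi(t)|D|}W(t-s)\big(f,g\big)\big(e^{-\phi(s)|D|}w,\,e^{-\phi(s)|D|}v\big)(s)\,ds.
$$
The first absorption inequality turns the linear estimate \eqref{Inegualité sur le semigroupe de NSKl dans les pseudomesures } into the same bound up to the factor $e^{c_0/4}$; the second, combined with Lemma~\ref{Estimation de la norme de Kato-pseudomesure des nonlin } applied with $\delta=c_0/2$ (this is precisely where it matters that the bilinear lemma holds for every $\delta>0$), reproduces the nonlinear estimates \eqref{Estimation semigroupe NSKl pour f}--\eqref{Estimation semigroupe NSKl pour g3} for the weighted unknowns up to the factor $e^{c_0/2}$. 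Hence $\Phi_\phi$ satisfies the stability and contraction estimates of Theorem~\ref{**Existence global NSK pseudo-measure**} with the constants multiplied by these fixed exponential factors, and the smallness threshold $\rho/(2e^{2c_0})$ is calibrated so that conditions \eqref{stabilité de la boul par Phi} and \eqref{condition de petitesse sur R pour contraction} remain in force for the weighted problem. The Banach fixed-point theorem then produces a unique fixed point $(v,w)$ of $\Phi_\phi$ in a ball of $X_\infty$; applying $e^{-\phi(t)|D|}$ (a multiplier of norm $\leq 1$) to $\Phi_\phi(v,w)=(v,w)$ shows that $(e^{-\phi(t)|D|}v,e^{-\phi(t)|D|}w)$ is a fixed point of the original map $\Phi$ inside $B(0,R)$, so by the uniqueness in Theorem~\ref{**Existence global NSK pseudo-measure**} it coincides with $(a,u)$. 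Therefore $(v,w)=e^{\phi(t)|D|}(a,u)$ is bounded in $X_\infty$, which gives the claim.

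I expect the main obstacle to be purely the bookkeeping of the two time scales in the Duhamel integral: one must keep the weight evaluated at the running time $t$ while the bilinear interaction, and hence the reconstruction of the weighted arguments, takes place at $s$, and then verify that the leftover $e^{(\phi(t)-\phi(s))|D|}$ is genuinely controlled by the heat-type kernel over $[s,t]$ rather than over $[0,t]$. The inequality $\sqrt t-\sqrt s\leq\sqrt{t-s}$ is the single nontrivial estimate that makes this mechanism work and that fixes the $\sqrt t$ growth of the radius; everything else is a transcription of the contraction already carried out in Theorem~\ref{**Existence global NSK pseudo-measure**}.
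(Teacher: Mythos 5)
Your proposal is correct and follows essentially the same route as the paper: conjugate by the weight $e^{c_0\sqrt{t}|D|}$, absorb the weight into the heat kernel both in the linear term and in the Duhamel term (the paper's inequality \eqref{Ineguality convolution poids exponentiel} is exactly your second absorption estimate, obtained there by a case analysis on $\sqrt{t}|\xi|$ rather than via $\sqrt{t}-\sqrt{s}\leq\sqrt{t-s}$, yielding the constant $e^{2c_0}$ instead of your $e^{c_0/2}$), rerun the fixed point in the weighted space, and identify the weighted fixed point with the solution of Theorem \ref{**Existence global NSK pseudo-measure**} by uniqueness in $B(0,R)$. The only differences are in presentation and in the explicit absorption constants, which do not affect the argument.
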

The proof of the  global existence for analytic solutions follows the main scheme than the proof of global existence. The difference is the choice of the functional space where we look for the fixed point. The idea is to consider a weighted norm of the form $e^{\delta\sqrt{t}|D|}$, where $\delta\sqrt{t}$ gives a radius of analyticity for the solution at any positive time $t$. This method is well-known (see \cite{GevreyanalyticityanddecayforthecompressibleNavier-Stokessystemwithcapillarity} for this system). We begin by a version of lemma \ref{Estimation de la norme de Kato-pseudomesure des nonlin }, with analytic norm. We recall that $d\geq 2$ and $p>2$ is such that $d-3+\frac{4}{p}>0$.
\begin{lem}
\label{Estimation de la norme de Kato-pseudomesure analytique des nonlin }
Let $T$ be in $]0,+\infty]$. Let $a$ and $b$ two homogeneous Fourier multipliers of degree 1. Then for every $u$ and $v$ in the Kato space $K^{p,d-1}_{T}$, we have
\begin{equation}
\label{produit alpha analytique}
\|\int_{0}^{t}e^{c_0(t-s)\Laplace}A_t(u,v)(s)ds\|_{K^{p,d-1}_{T}}\leq  2^{1-\frac{1}{p}}e^{2c_0} C_{a} \|e^{c_0\sqrt{t}|D|}u\|_{K^{p,d-1}_{T}}\|e^{c_0\sqrt{t}|D|}v\|_{K^{p,d-1}_{T}},
\end{equation}
\begin{equation}
\label{produit beta analytique}
\|\int_{0}^{t}e^{c_0(t-s)\Laplace}B_t(u,v)(s)ds\|_{K^{p,d-1}_{T}}\leq  2^{1-\frac{1}{p}}e^{2c_0}C_{b}  \|e^{c_0\sqrt{t}|D|}u\|_{K^{p,d-1}_{T}}\|e^{c_0\sqrt{t}|D|}v\|_{K^{p,d-1}_{T}},
\end{equation}
where, for all $t>0$, $A_t(u,v):=e^{c_0\sqrt{t}|D|}\left(u\cdot a(D)v\right)$ and $B_t(u,v):=e^{c_0\sqrt{t}|D|}\left(b(D)(u\cdot v)\right)$.
\end{lem}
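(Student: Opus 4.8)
The plan is to deduce this analytic estimate directly from the non-analytic Lemma \ref{Estimation de la norme de Kato-pseudomesure des nonlin } by absorbing the Gevrey weight $e^{c_0\sqrt{t}|D|}$ into the parabolic smoothing, at the price of halving the heat coefficient and of a bounded multiplicative constant. Two elementary inequalities drive everything: the subadditivity of the square root, $\sqrt{t}\le\sqrt{t-s}+\sqrt{s}$ for $0\le s\le t$, and the triangle inequality $|\xi|\le|\xi-\eta|+|\eta|$. Combined, they give on the Fourier side the pointwise bound $e^{c_0\sqrt{t}|\xi|}\le e^{c_0\sqrt{t-s}|\xi|}\,e^{c_0\sqrt{s}|\xi-\eta|}\,e^{c_0\sqrt{s}|\eta|}$, which is exactly what is needed to route the weight: the two factors carrying $\sqrt{s}$ will attach to $\widehat{u}$ and $\widehat{v}$ at their own time $s$, producing the analytic Kato norms, while the surplus factor $e^{c_0\sqrt{t-s}|\xi|}$ must be swallowed by the heat kernel.

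I would first treat $A_t$. Taking the Fourier transform of $\int_{0}^{t}e^{c_0(t-s)\Laplace}A_t(u,v)(s)\,ds$ turns the semigroup into $e^{-c_0(t-s)|\xi|^2}$ and the weight into $e^{c_0\sqrt{t}|\xi|}$, while $\widehat{u\cdot a(D)v}(s,\cdot)$ becomes the convolution $\widehat{u}(s,\cdot)\star\big(a\,\widehat{v}(s,\cdot)\big)$; since $a$ is homogeneous of degree one, $|a(\eta)|\lesssim|\eta|$. Passing to absolute values and inserting the distributed weight above, the two $e^{c_0\sqrt{s}|\cdot|}$ factors combine with $|\widehat{u}(s,\xi-\eta)|$ and $|\widehat{v}(s,\eta)|$ into $|\widehat{e^{c_0\sqrt{s}|D|}u}(s,\xi-\eta)|$ and $|\widehat{e^{c_0\sqrt{s}|D|}v}(s,\eta)|$. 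Note that the supremum controlling these at time $s$ is precisely the norm $\|e^{c_0\sqrt{t}|D|}u\|_{K^{p,d-1}_{T}}$ appearing on the right-hand side, the variable $t$ there being the running sup variable.

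For the surplus I use the key pointwise inequality $e^{c_0\sqrt{t-s}|\xi|}\,e^{-c_0(t-s)|\xi|^2}\le e^{2c_0}\,e^{-\frac{c_0}{2}(t-s)|\xi|^2}$, which holds because $c_0\sqrt{t-s}|\xi|-\frac{c_0}{2}(t-s)|\xi|^2$, seen as a function of $y=\sqrt{t-s}|\xi|$, is bounded by its maximum $c_0/2\le 2c_0$. After this substitution the integrand is, up to the factor $e^{2c_0}$, literally the one estimated in the proof of Lemma \ref{Estimation de la norme de Kato-pseudomesure des nonlin } for $\alpha\big(e^{c_0\sqrt{s}|D|}u,\,e^{c_0\sqrt{s}|D|}v\big)$, but with heat coefficient $c_0/2$ in place of $c_0$. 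Invoking that lemma with $\delta=c_0/2$ yields the bound; since the constant $C_a$ there carries the factor $\delta^{-(1-\frac1p)}$ coming from the beta function, replacing $c_0$ by $c_0/2$ multiplies it exactly by $2^{1-\frac1p}$, giving the announced constant $2^{1-\frac1p}e^{2c_0}C_a$. The estimate \eqref{produit beta analytique} for $B_t$ is identical: here $b(D)$ acts on the product, so $|b(\xi)|\lesssim|\xi|$ is pulled in front of the convolution and one applies Lemma \ref{Estimation de la norme de Kato-pseudomesure des nonlin } with $\alpha=\beta=d-1+\frac2p$ and $\delta=c_0/2$.

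The main obstacle is the time-freezing mismatch: the analytic weight in $A_t$ and $B_t$ is evaluated at the final time $t$, whereas the only available information on $u(s)$ and $v(s)$ is their analyticity radius $c_0\sqrt{s}$ at the running time $s<t$. The whole argument hinges on the surplus $e^{c_0\sqrt{t-s}|D|}$ being controllable, and this is exactly where the parabolic nature of the linearized Korteweg system enters through Lemma \ref{Estimation Duhamel NSK lemme}: the Gaussian decay $e^{-c_0(t-s)|\xi|^2}$ beats the exponential growth $e^{c_0\sqrt{t-s}|\xi|}$ at every frequency, so the surplus costs only the constant $e^{2c_0}$ and a harmless halving of the decay rate. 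Once this is secured no new convolution estimate is required, and the result follows from the non-analytic lemma.
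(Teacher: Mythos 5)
Your argument is correct and follows essentially the same route as the paper: both reduce to Lemma \ref{Estimation de la norme de Kato-pseudomesure des nonlin } (with heat coefficient $c_0/2$, whence the factor $2^{1-\frac1p}$) by establishing the pointwise weight inequality \eqref{Ineguality convolution poids exponentiel}, the only difference being that you verify it via $\sqrt{t}\le\sqrt{t-s}+\sqrt{s}$ and the maximum of $y\mapsto c_0y-\tfrac{c_0}{2}y^2$, whereas the paper factors $t-s=(\sqrt{t}-\sqrt{s})(\sqrt{t}+\sqrt{s})$ and argues by cases on $\sqrt{t}|\xi|$. Your variant even yields the slightly sharper constant $e^{c_0/2}\le e^{2c_0}$, so nothing further is needed.
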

\begin{proof}
We adapt the proof of the lemma \ref{Estimation de la norme de Kato-pseudomesure des nonlin }. The additional key point that we need here is the inequality 
\begin{equation}
    \label{Ineguality convolution poids exponentiel}
    e^{-\frac{c_0}{2}(t-s)|\xi|^2}e^{-c_0\sqrt{s}|\xi-\eta|}e^{-c_0\sqrt{s}|\eta|}\leq e^{2c_0}e^{-c_0\sqrt{t}|\xi|}.
\end{equation}
From the inverse triangular inequality, it follows that $-\sqrt{s}|\xi-\eta|-\sqrt{s}|\eta|\leq -\sqrt{s}|\xi|$. Hence, for establish \eqref{Ineguality convolution poids exponentiel}, it is enough to prove that
$$
I:=(\sqrt{t}-\sqrt{s})|\xi|(1-(\sqrt{t}+\sqrt{s})\frac{|\xi|}{2})\leq 2.
$$
If $\sqrt{t}|\xi|\geq 2$, we deduce that $I\leq 0\leq 2$ and if $\sqrt{t}|\xi|<2$, then $I\leq \sqrt{t}|\xi|\leq 2$. Then we obtain the expected upper bound for $I$, that concludes the proof of the lemma.
\end{proof}
In lemma \ref{Estimation de la norme de Kato-pseudomesure analytique des nonlin }, constants $C_{a}$ and $C_{b}$ are the same as in lemma \ref{Estimation de la norme de Kato-pseudomesure des nonlin }. We can also notice the presence of a factor $2^{1-\frac{1}{p}}$ unlike the non analytic version.
To deal with the analytic setting, we introduce the analytic space $Y_{T}$ of $(a,u)\in (K_{T}^{p,d-1}\cap K_{T}^{p,d})\times K_{T}^{p,d-1}$, that we equip with the norm defined by
$$
\|(a,u)\|_{Y_T}:=\max\{\|e^{c_0\sqrt{t}|D|}a\|_{K_{T}^{p,d-1}},\|e^{c_0\sqrt{t}|D|}a\|_{K_{T}^{p,d}}\} +\|e^{c_0\sqrt{t}|D|}u\|_{K_{T}^{p,d-1}}.
$$

\begin{proof}[Proof of theorem \ref{**Analyticite solution global de NSK pseudo-mesure**}]
First, we remark that for every $\xi\in\R^d$ and positive time $t$, we have
\begin{equation}
\label{Truc egualite}
e^{-c_0t|\xi|^2}(t|\xi|^2)^{\frac{1}{p}}e^{c_0\sqrt{t}|\xi|}=\left(\frac{2}{c_{0}}\right)^{\frac{1}{p}}\times\left(e^{-\frac{c_0}{2}t|\xi|^2}\left(\frac{c_0}{2}t|\xi|^2\right)^{\frac{1}{p}}\right)\times \left(e^{c_0\sqrt{t}|\xi|}e^{-\frac{c_0}{2}t|\xi|^2}\right).
\end{equation}
Since $\frac{1}{p}<1$, the second factor of the right-hand side of the previous identity is lower than $1$. Using the inequality \eqref{Ineguality convolution poids exponentiel}, the third factor to the right-hand of \eqref{Truc egualite} is increased by $e^{2c_0}$. Hence, for all $v\in PM^{d-1}$, we have
$$
e^{-c_0t|\xi|^2}t^{\frac{1}{p}}e^{c_0\sqrt{t}|\xi|}|\widehat{v}(\xi)||\xi|^{d-1+\frac{2}{p}}\leq\frac{2^{\frac{1}{p}}e^{2c_0}}{c_{0}^{\frac{1}{p}}}\|v\|_{PM^{d-1}}.
$$
We suppose that the initial data $(a_0,u_0)\in PM^{d-1}\cap PM^{d})\times PM^{d-1}$ satisfy
\begin{equation}
    \label{Petitesse de la norme de (a0,|D|a0,u0) pour NSK analytique}
    \|(a_0,|D|a_0,u_0)\|_{PM^{d-1}}<\Tilde{\rho},
\end{equation}
for some positive real number $\Tilde{\rho}$, small enough.
Using the lemma \ref{Estimation de la norme de Kato-pseudomesure analytique des nonlin }, we deduce by the same way of the proof of global existence, that for a radius $\Tilde{R}:=\frac{R}{2^{1-\frac{1}{p}}e^{2c_0}}$
and for $\Tilde{\rho}:=\frac{\rho}{2e^{2c_0}}$. The map
$$
\begin{array}{rcl}
\Psi: Y_{\infty} &\to& Y_{\infty}\\
 (a,u) &\mapsto & W(\cdot)(a_0,u_0)+\int_{0}^{\cdot}W(\cdot-s)(f(u,a)(s),g(u,a)(s))ds,
\end{array}
$$ 
have a unique fixed-point $(a,u)$ in the ball of center $0$ and radius $\Tilde{R}$ in $Y_{\infty}$. \\

Furthermore, if the initial data $(a_0,u_0)$ satisfy \eqref{Petitesse de la norme de (a0,|D|a0,u0) pour NSK analytique}, using  $\Tilde{\rho}<\rho$, we deduce the existence of a global solution, constructed as the unique\footnote{We recall that the fixed-point of the Banach fixed-point, like in \cite{FunctionalAnalysisSobolevSpacesandPartialDifferentialEquations} theorem 5.7, is unique} fixed point of $\Phi$ in the ball $B(0,R)$ of $X_{\infty}$. Moreover the fixed-point of $\Psi$ found previously, namely $(a,u)$, is in the ball $B(0,R)$ of the space $X_{\infty}$ and is the unique fixed-point of $\Phi$ in this ball. Indeed,
it is enough to observe that 
$$
\|\Phi(a,u)\|_{X_{\infty}}\leq\|\Psi(a,u)\|_{Y_{\infty}}\leq \Tilde{R}<R,
$$
keeping in mind that $\Tilde{\rho}<\rho$. In particular, we conclude that, if the initial data $(a_0,u_0)$ satisfy \eqref{Petitesse de la norme de (a0,|D|a0,u0) pour NSK analytique}, the solution of theorem \ref{**Existence global NSK pseudo-measure**} is analytic and, at any positive time $t$, its radius of analyticity is bounded below by $c_0\sqrt{t}$.
\end{proof}

\section{Estimate near 0}
Now, we turn our attention to the case of supercritical initial data. First we give a local in time existence and uniqueness theorem (in the subsection 5.1), so-called Kato type theorem. This result holds for supercritical initial data, wich we will pick arbitrarily large. In the subsection 5.2, we establish the analyticity of the solution with supercritical initial data and give the same lower bound as in the case of critical initial data. To end this section, we investigate in the subsection 5.3 the instantaneous smoothing effect, giving a better estimate of the radius of analyticity near 0. This last result constitutes the main novelty of this paper, related to the study of the radius of analyticity for Navier-Stokes-Korteweg type system.
\subsection{Kato type theorem for local existence with supercritical data}

In order to give a better estimate of the radius of analyticity in the neighbourhood of $0$, we need to obtain a local existence results for supercritical initial data, namely in the space $\left(PM^{d-1+\delta}\cap PM^{d+\delta}\right)\times PM^{d-1+\delta}$. Our goal here is to prove a Kato type theorem to obtain the local existence which will be proved by a Banach fixed point argument.
\begin{theo}
\label{theoreme de type Kato pour pseudo-mesure dans l'espace entier}
Let $\delta$ in $]0,\frac 2p]$. For any initial data $(a_0,u_0)\in \left(PM^{d-1+\delta}\cap PM^{d+\delta}\right)\times PM^{d-1+\delta}$, there exists a positive real number $T$ such that the Cauchy problem \eqref{NSK quantique}\eqref{NSK quantique condition initial} has a unique solution $(a,u)$ in the space $X_{T}$. Moreover, there exists a positive constant $c_{\delta}$, that does not depend on the initial data $(a_0,u_0)$, such that $T\geq c_{\delta}\|(a_0,|D|a_0,u_0)\|_{PM^{d-1+\delta}}^{-\frac{2}{\delta}}$.
\end{theo}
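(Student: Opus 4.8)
The plan is to run the same Banach fixed-point scheme as in the proof of Theorem \ref{**Existence global NSK pseudo-measure**}, but on the ball $B(0,R)$ of $X_T$ for a carefully coupled pair $(R,T)$, the new ingredient being that the supercritical regularity of the data buys a positive power of $T$ in the linear estimate, which is what lets the argument close for arbitrarily large data on a short time interval. Concretely, I would seek a fixed point of the map
\[
\Phi(a,u):=W(\cdot)(a_0,u_0)+\int_{0}^{\cdot}W(\cdot-s)\big(f(u,a)(s),g(u,a)(s)\big)\,ds
\]
in $X_T$, exactly as before.

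The heart of the matter is the linear estimate. Using Lemma \ref{Estimation Duhamel NSK lemme} for $W(t)(a_0,u_0)$ and the definition of the Kato norm, I want to prove that
\[
\|W(\cdot)(a_0,u_0)\|_{X_T}\lesssim T^{\frac{\delta}{2}}\,\|(a_0,|D|a_0,u_0)\|_{PM^{d-1+\delta}}.
\]
For the $K_T^{p,d-1}$ component one bounds $|\widehat{(a_0,u_0)}(\xi)|$ by $|\xi|^{-(d-1+\delta)}\|(a_0,u_0)\|_{PM^{d-1+\delta}}$, which reduces the estimate to controlling $\sup_{\xi}|\xi|^{\frac{2}{p}-\delta}e^{-c_0t|\xi|^2}\lesssim t^{-\frac12(\frac 2p-\delta)}$; multiplying by the weight $t^{1/p}$ produces precisely the factor $t^{\delta/2}$. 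The $K_T^{p,d}$ component of $a$ and the $u$ component are treated identically, using $\||D|a_0\|_{PM^{d-1+\delta}}=\|a_0\|_{PM^{d+\delta}}$.

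Next I would recall that the bilinear estimates \eqref{Estimation semigroupe NSKl pour f}--\eqref{Estimation semigroupe NSKl pour g3}, which rest on Lemma \ref{Estimation de la norme de Kato-pseudomesure des nonlin }, hold with constants \emph{independent} of $T$ (the $T$-dependence was already eliminated by the change of variables $\sigma\mapsto t\sigma$ that produced the beta function). Writing $C_L$ for the linear constant above and $K_\Phi$ for the bilinear one, $\Phi$ then satisfies on $X_T$ both
\[
\|\Phi(a,u)\|_{X_T}\leq C_L\,T^{\frac{\delta}{2}}\|(a_0,|D|a_0,u_0)\|_{PM^{d-1+\delta}}+5K_\Phi\|(a,u)\|_{X_T}^2
\]
and the corresponding Lipschitz bound $\|\Phi(a,u)-\Phi(b,v)\|_{X_T}\leq 16K_\Phi R\,\|(a,u)-(b,v)\|_{X_T}$ on $B(0,R)$. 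Setting $R:=2C_L\,T^{\delta/2}\|(a_0,|D|a_0,u_0)\|_{PM^{d-1+\delta}}$, both stability of $B(0,R)$ and the contraction property follow as soon as $16K_\Phi R<1$, i.e. as soon as $T^{\delta/2}\|(a_0,|D|a_0,u_0)\|_{PM^{d-1+\delta}}$ is small enough; solving this for $T$ yields exactly $T\geq c_\delta\,\|(a_0,|D|a_0,u_0)\|_{PM^{d-1+\delta}}^{-\frac{2}{\delta}}$ with $c_\delta$ depending only on $\delta,p,d,c_0$ and not on the data, and Banach's fixed-point theorem then delivers the unique solution in $X_T$.

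The main obstacle is the linear estimate and its gain $T^{\delta/2}$: everything hinges on the hypothesis $\delta\leq\frac{2}{p}$, which guarantees that the frequency exponent $\frac{2}{p}-\delta$ is nonnegative, so that $\sup_\xi|\xi|^{\frac2p-\delta}e^{-c_0t|\xi|^2}$ is finite and scales like $t^{-\frac12(\frac2p-\delta)}$. This is precisely the step that converts supercritical regularity into short-time smallness and thereby frees the fixed-point argument from any size restriction on the initial data.
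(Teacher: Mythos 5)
Your proposal follows essentially the same route as the paper: the identical fixed-point map $\Phi$ on a ball of $X_T$, the linear estimate $\|W(\cdot)(a_0,u_0)\|_{X_T}\lesssim T^{\delta/2}\|(a_0,|D|a_0,u_0)\|_{PM^{d-1+\delta}}$ obtained from Lemma \ref{Estimation Duhamel NSK lemme} together with the boundedness of $y\mapsto e^{-c_0y}y^{\frac1p-\frac\delta2}$ (which is exactly where $\delta\le\frac2p$ enters), the $T$-independent bilinear bounds, and the coupling of $R$ and $T$ yielding $T\geq c_\delta\|(a_0,|D|a_0,u_0)\|_{PM^{d-1+\delta}}^{-2/\delta}$. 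The only difference is cosmetic: you define $R$ in terms of $T$ and the data, whereas the paper fixes a small $R$ first and then solves for $T$; both give the same conclusion.
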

Compared with the global existence (theorem \ref{**Existence global NSK pseudo-measure**}) this theorem give the uniqueness of the solution during the existence time. Furthermore, we don't need any smallness assumptions on the initial data, but the existence interval gets smaller as the norm of the initial data gets bigger.
\begin{proof}
Let $(a_0,u_0)$ be in $\left(PM^{d-1+\delta}\cap PM^{d+\delta}\right)\times PM^{d-1+\delta}$ and $T$ a positive time will be chosen later. Let $v\in PM^{d-1+\delta}$. We deduce from the lemma \ref{Estimation Duhamel NSK lemme} that
\begin{equation}
    \label{Estimation semigroup NSK-chaleur avec PM d-1+ delta }
    \|W(t)v\|_{K_{T}^{p,d-1}}\lesssim \|e^{c_0t\Laplace} v\|_{K^{p,d-1}}.
\end{equation}
Furthermore, since the function $y\in \R_{+}\mapsto e^{-c_0 y}y^{\frac{1}{p}-\frac{\delta}{2}}$ is bounded, for every $t\in[0,T]$ and $\xi\in\R^d$ we have
$$
e^{-c_0t|\xi|}t^{\frac{1}{p}}|\widehat{v}(\xi)||\xi|^{d-1+\frac 2p}\lesssim_{c_0,p,\delta}T^{\frac{\delta}{2}}\|v\|_{PM^{d-1+\delta}},
$$
hence
$$
\|e^{c_0t\Laplace}v\|_{PM^{d-1+\frac{2}{p}}}\lesssim_{c_0,p,\rho} T^{\frac{\delta}{2}}\|v\|_{PM^{d-1+\delta}}.
$$
From \eqref{Estimation semigroup NSK-chaleur avec PM d-1+ delta } and the estimate above, we conclude that there exist a positive constant $\Tilde{C}_{\delta}$, such that
\begin{equation}
    \label{Estimation semigroup NSK avec PM d-1+ delta}
    \|W(t)(a_0,u_0)\|_{X_T}\leq \Tilde{C}_{\delta} T^{\frac{\delta}{2}}\|(a_0,|D|a_0,u_0)\|_{PM^{d-1+\delta}}.
\end{equation}
Now, we consider the map
$$
\begin{array}{rcl}
\Phi: X_{T} &\to& X_{T}\\
 (a,u) &\mapsto & W(\cdot)(a_0,u_0)+\int_{0}^{\cdot}W(\cdot-s)(f(u,a)(s),g(u,a)(s))ds.
\end{array}
$$ 
As in the proof of the global existence theorem, for some positive radius $R$, if $(a,u)$ and $(b,v)$ in $B(0,R)$ we have
\begin{align}
    \|\Phi(a,u)-W(\cdot)(a_0,u_0)\|_{X_T} & \leq 5K_{\Phi}R^2,\label{IV}\\
    \|\Phi(a,u)-\Phi(b,v)\|_{X_T} & \leq 16 K_{\Phi} R\|(a,u)-(b,v)\|_{X_T}.\label{V}  
\end{align}
Assume that the radius $R$ and the time $T$ satisfy
$$
16RK_{\Phi}<1\ \ \ \text{and}\ \ \ T= c_{\delta}\|(a_0,|D|a_0,u_0)\|_{PM^{d-1+\delta}}^{-\frac{2}{\delta}}
$$
where $c_{\delta}:=\left(\frac{R}{2\Tilde{C}_{\delta}}\right)^{\frac{\delta}{2}}$. Then, using \eqref{Estimation semigroup NSK avec PM d-1+ delta}, \eqref{IV} and \eqref{V}, we observe that $\Phi$ is an contraction of the ball $B(0,R)$ of $X_T$ into itself. We conclude the proof by applying the Banach fixed-point theorem. 
\end{proof}
As the previous section, we can get the analyticity of solutions with an estimate of the radius of analyticity.

\begin{theo}
\label{theoreme de type Kato analytic pour pseudo-mesure dans l'espace entier}
Let $\delta$ in $]0,\frac 2p]$. For any initial data $(a_0,u_0)\in \left(PM^{d-1+\delta}\cap PM^{d+\delta}\right)\times PM^{d-1+\delta}$, there exists a positive real number $T$ such that the Cauchy problem \eqref{NSK quantique}\eqref{NSK quantique condition initial} has unique solution $(a,u)$ in the space $X_{T}$, such that for all $t\in]0,T]$, $(a(t),u(t))$ is real analytic with
$$
\rad((a(t),u(t)))\geq c_0\sqrt{t}.
$$

Moreover, there exists a positive constant $d_{\delta}$, that does not depend on the initial data $(a_0,u_0)$, such that $T\geq d_{\delta}\|(a_0,|D|a_0,u_0)\|_{PM^{d-1+\delta}}^{-\frac{2}{\delta}}$.
\end{theo}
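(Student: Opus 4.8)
The plan is to reproduce the fixed-point scheme of Theorem~\ref{theoreme de type Kato pour pseudo-mesure dans l'espace entier}, but carried out in the analytic space $Y_T$ rather than $X_T$, exactly as the global analytic Theorem~\ref{**Analyticite solution global de NSK pseudo-mesure**} was deduced from its non-analytic counterpart. Concretely, I would keep the very same Duhamel map
$$
\Psi(a,u):=W(\cdot)(a_0,u_0)+\int_0^\cdot W(\cdot-s)\bigl(f(u,a)(s),g(u,a)(s)\bigr)\,ds,
$$
now regarded as acting on $Y_T$, and look for a fixed point in a ball $B_{Y_T}(0,R)$. Since the symbol of $e^{c_0\sqrt t\,|D|}$ is at least $1$, one has $\|\cdot\|_{X_T}\le\|\cdot\|_{Y_T}$, so any fixed point of $\Psi$ in $Y_T$ is automatically a fixed point in $X_T$ lying in the corresponding $X_T$-ball; by the uniqueness part of Theorem~\ref{theoreme de type Kato pour pseudo-mesure dans l'espace entier} it must coincide with the solution already constructed there, which is precisely what transfers the analytic information to that solution.

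The first genuine step is to establish the analytic linear estimate replacing \eqref{Estimation semigroup NSK avec PM d-1+ delta}, namely
$$
\|W(t)(a_0,u_0)\|_{Y_T}\le \widetilde C_\delta\,T^{\delta/2}\,\|(a_0,|D|a_0,u_0)\|_{PM^{d-1+\delta}}
$$
for a suitable constant $\widetilde C_\delta$. The mechanism is the factorisation
$$
e^{-c_0t|\xi|^2}t^{\frac1p}e^{c_0\sqrt t|\xi|}|\xi|^{d-1+\frac2p}
= t^{\frac\delta2}\bigl(e^{-\frac{c_0}{2}t|\xi|^2}(t|\xi|^2)^{\frac1p-\frac\delta2}\bigr)\bigl(e^{c_0\sqrt t|\xi|}e^{-\frac{c_0}{2}t|\xi|^2}\bigr)|\xi|^{d-1+\delta},
$$
in which the condition $\delta\le\frac2p$ makes $y\mapsto e^{-\frac{c_0}{2}y}y^{\frac1p-\frac\delta2}$ bounded on $\R_+$ (yielding the supercritical gain $t^{\delta/2}\le T^{\delta/2}$), while the last parenthesis is controlled by $e^{2c_0}$ through inequality \eqref{Ineguality convolution poids exponentiel}. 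This merges the supercritical argument behind \eqref{Estimation semigroup NSK avec PM d-1+ delta} with the exponential-weight control used in the proof of the global analytic theorem.

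Next I would bound the bilinear part of $\Psi$ using the analytic bilinear estimates \eqref{produit alpha analytique}--\eqref{produit beta analytique} of Lemma~\ref{Estimation de la norme de Kato-pseudomesure analytique des nonlin }, which reproduce the non-analytic estimates up to the harmless factor $2^{1-\frac1p}e^{2c_0}$. This gives stability and contraction bounds of exactly the form \eqref{IV}--\eqref{V}, with a modified constant $K_\Psi:=2^{1-\frac1p}e^{2c_0}K_\Phi$. Choosing $R$ so that $16RK_\Psi<1$ and then
$$
T=d_\delta\,\|(a_0,|D|a_0,u_0)\|_{PM^{d-1+\delta}}^{-\frac2\delta},\qquad d_\delta:=\Bigl(\tfrac{R}{2\widetilde C_\delta}\Bigr)^{\frac\delta2},
$$
makes $\Psi$ a contraction of $B_{Y_T}(0,R)$ into itself, and $d_\delta$ depends only on $\delta,p,d,c_0$ and not on the data. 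Banach's theorem then produces a unique fixed point $(a,u)\in Y_T$; the membership of $e^{c_0\sqrt t|D|}(a(t),u(t))$ in the relevant pseudo-measure spaces, combined with Proposition~\ref{Equivalence entre decroissance exponentielle et analycite}, yields real analyticity with $\rad\bigl((a(t),u(t))\bigr)\ge c_0\sqrt t$ for every $t\in\,]0,T]$.

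The main obstacle is the linear estimate of the second paragraph: one must simultaneously extract the supercritical gain $T^{\delta/2}$ and tame the growth of the analytic weight $e^{c_0\sqrt t|D|}$ against the parabolic decay, in such a way that the contraction time keeps the scaling $\|(a_0,|D|a_0,u_0)\|_{PM^{d-1+\delta}}^{-2/\delta}$ of Theorem~\ref{theoreme de type Kato pour pseudo-mesure dans l'espace entier} (so that $d_\delta$ is independent of the data). Everything else---the nonlinear estimates and the fixed-point mechanics---is a routine transcription of the earlier proofs, the only genuinely new ingredient being the combination of the boundedness of $e^{-\frac{c_0}{2}y}y^{\frac1p-\frac\delta2}$ with \eqref{Ineguality convolution poids exponentiel} inside a single factorisation.
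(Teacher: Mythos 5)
Your proposal follows essentially the same route as the paper: the same Duhamel map viewed as a contraction on a ball of $Y_T$, the same factorisation of $e^{-c_0t|\xi|^2}t^{\frac1p}e^{c_0\sqrt t|\xi|}|\xi|^{d-1+\frac2p}$ combining the supercritical gain $T^{\delta/2}$ (from the boundedness of $y\mapsto e^{-\frac{c_0}{2}y}y^{\frac1p-\frac\delta2}$, valid since $\delta\le\frac2p$) with the bound $e^{c_0\sqrt t|\xi|}e^{-\frac{c_0}{2}t|\xi|^2}\le e^{2c_0}$, the analytic bilinear estimates of Lemma \ref{Estimation de la norme de Kato-pseudomesure analytique des nonlin }, and the same choice $T=d_\delta\|(a_0,|D|a_0,u_0)\|_{PM^{d-1+\delta}}^{-\frac2\delta}$. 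Your remark identifying the $Y_T$ fixed point with the $X_T$ solution via $\|\cdot\|_{X_T}\le\|\cdot\|_{Y_T}$ and the uniqueness of Theorem \ref{theoreme de type Kato pour pseudo-mesure dans l'espace entier} is a small but welcome completion of an argument the paper leaves implicit.
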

\begin{proof}
 Let $(a_0,u_0)$ be in $\left(PM^{d-1+\delta}\cap PM^{d+\delta}\right)\cap PM^{d-1+\delta}$ and $T$ a positive time that will be chosen later. Now, we pick $v$ in  $PM^{d-1+\delta}$. The lemma \ref{Estimation Duhamel NSK lemme} provides
 \begin{equation}
     \label{semi-groupe estimation Kato-analytique}
     \|e^{c_0\sqrt{t}|D|}W(t)v\|_{K_{T}^{p,d-1}}\lesssim\|e^{c_0t\Laplace}e^{c_0\sqrt{t}|D|}v\|_{K_{T}^{p,d-1}}.
 \end{equation}
 Moreover, for all $t\in[0, T]$, we have
 \begin{align*}
     e^{-c_0t|\xi|^2}t^{\frac{1}{p}} e^{c_0\sqrt{t}|\xi|}|\widehat{v}(\xi)||\xi|^{d-1+\frac{2}{p}} & =t^{\frac{\delta}{2}}\left(e^{-\frac{c_0}{2}t|\xi|^2}(t|\xi|^{\frac{1}{p}-\frac{1}{\delta}})\right)\times\left(|\widehat{v}(\xi)||\xi|^{d-1+\delta}\right)\\
     & \times\left(e^{-\frac{c_0}{2}t|\xi|^2}e^{c_0\sqrt{t}|\xi|}\right)\\
     & \leq T^{\frac{\delta}{2}}\left(\frac{c_0}{2}\right)^{\frac{1}{p}-\frac{\delta}{2}}e^{2c_0}\|c\|_{PM^{d-1+\delta}}.
 \end{align*}
Combining the last estimate and \eqref{semi-groupe estimation Kato-analytique}, we get
$$
\|e^{c_0\sqrt{t}|D|}W(t)v\|_{K_{T}^{p,d-1}}\lesssim  T^{\frac{\delta}{2}}\|v\|_{PM^{d-1+\delta}}.
$$
Hence, there exists a positive constant $D_{\delta}$ such that
\begin{equation}
    \label{estimation norm YT du semi-groupe}
    \|W(t)(a_0,u_0)\|_{Y_T}\leq D_{\delta} T^{\delta}\|e^{c_0\sqrt{t}|D|}(a_0,|D|a_0,u_0)\|_{PM^{d-1+\delta}}.
\end{equation}
Now we consider the map
$$
\begin{array}{rcl}
\Phi: Y_{T} &\to& Y_{T}\\
 (a,u) &\mapsto & W(\cdot)(a_0,u_0)+\int_{0}^{\cdot}W(\cdot-s)(f(u,a)(s),g(u,a)(s))ds.
\end{array}
$$ 
Let $R$ be a positive radius that will be chosen later. From the lemma \ref{Estimation de la norme de Kato-pseudomesure analytique des nonlin }, we deduce that there exists a positive constant $K_{\Phi}$ such that, for all $(a,u)$ and $(b,v)$ in the ball $B(0,R)$ of $Y_T$, we have
$$
\|\Phi(a,u)-W(\cdot)(a_0,u_0)\|_{Y_T}  \leq 5K_{\Phi}R^2
$$
and
$$
    \|\Phi(a,u)-\Phi(b,v)\|_{Y_T}  \leq 16 K_{\Phi} R\|(a,u)-(b,v)\|_{Y_T}.
$$
For $R>0$ and $T$ such that
$$
16RK_{\Phi}<1\ \ \ \text{and}\ \ \ \ T=d_{\delta}\|(a_0,|D|a_0,u_0)\|_{PM^{d-1+\delta}}^{-\frac{2}{\delta}},
$$
where $\delta:=\left(\frac{R}{2D_{\delta}}\right)^{\frac{\delta}{2}}$, as in the proof of theorem \ref{theoreme de type Kato pour pseudo-mesure dans l'espace entier}, from the Banach fixed-point theorem we conclude that there exists an unique fixed-point $(a,u)\in Y_T$ of $\Phi$ which solve \eqref{NSK quantique}\eqref{NSK quantique condition initial}. Since $(a,u)\in Y_T$, for all $t$ in $]0,T]$, we get $\rad((a(t),u(t)))\leq c_0\sqrt{t}$, which completes the proof.

\end{proof}
\subsection{Estimate of the radius of analyticity}
In this section we show an improvement of the estimate of the radius of analyticity near $0$. We will adapt the method used by J.-Y. Chemin, I. Gallagher and P. Zhang in \cite{Ontheradiusofanalyticityofsolutiontosemi-linearparabolicsystems} to our context, in order to obtain an estimate of the radius of analyticity when the initial data is in the space $(PM^{p,d-1+\frac{2}{p}}\cap PM^{p,d+\frac{2}{p}})\times  PM^{p,d-1+\frac{2}{p}}$. \\
For $f\in L^{1}_{loc}\left([0,T];\mathcal{S}'(\R^d)\right)$, we set
\begin{equation}
\label{definition de f barre}
\underline{f}(t):=e^{-\frac{\lambda^2}{4(1-\varepsilon)c_0}\frac{t}{T}+\frac{\lambda t}{\sqrt{T}}|D|}f(t)\ \ \ \ \ \ (t\in[0,T])
\end{equation}
For all $T>0$ and for each $t\in [0,T]$ and $\varepsilon>0$, we define the  Fourier multiplier
$$
\theta (t,D,\varepsilon):=-\frac{\lambda^2}{4(1-\varepsilon)c_0}\frac{t}{T}+\lambda\frac{t}{\sqrt{T}}|D|.
$$
In order to study the radius of analyticity, we begin to establish some nonlinear estimate in the new analytic norm provided by $e^{\theta (\cdot,D,\varepsilon)}$.
\begin{lem}
\label{Estimation de la norme de Kato-pseudomesure des nonlin type CGZ}
Let $\varepsilon>0$ and $T>0$. For $\alpha$, $\beta$, $u$ and $v$ as in lemma \ref{Estimation de la norme de Kato-pseudomesure des nonlin }, we have the following inequalities
\begin{align}
    \|\int_{0}^{t}e^{c_0(t-s)\Laplace }\underline{\alpha}_t(u,v)(s)ds\|_{K^{p,d-1}_{T}} & \lesssim_{\varepsilon,c_0}e^{\frac{\lambda^2}{4(1-\varepsilon)c_0}}\|\underline{u}\|_{K^{p,d-1}_{T}}\|\underline{v}\|_{K^{p,d-1}_{T}},\\
    \|\int_{0}^{t}e^{c_0(t-s)\Laplace }\underline{\beta}_t(u,v)(s)ds\|_{K^{p,d-1}_{T}} & \lesssim_{\varepsilon,c_0}e^{\frac{\lambda^2}{4(1-\varepsilon)c_0}}\|\underline{u}\|_{K^{p,d-1}_{T}}\|\underline{v}\|_{K^{p,d-1}_{T}},
\end{align}

where $\underline{\alpha}_{t}:=e^{\theta(t,D,\varepsilon)}\alpha$ and $\underline{\beta}_{t}:=e^{\theta(t,D,\varepsilon)}\beta$.
\end{lem}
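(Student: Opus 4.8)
The plan is to mirror the proof of Lemma \ref{Estimation de la norme de Kato-pseudomesure analytique des nonlin }, passing to the Fourier side and absorbing the time--frequency weight into the parabolic smoothing of the linear semigroup; the only genuinely new input is an inequality adapted to the Chemin--Gallagher--Zhang weight $\theta$ in place of \eqref{Ineguality convolution poids exponentiel}. First I would pass to Fourier variables and write, for the $\alpha$-term (with $\theta(t,\xi,\varepsilon)$ denoting the Fourier symbol of $\theta(t,D,\varepsilon)$),
\begin{equation*}
\widehat{\underline{\alpha}_t(u,v)}(s,\xi)=e^{\theta(t,\xi,\varepsilon)}\int_{\R^d}\widehat{u}(s,\xi-\eta)\,a(\eta)\,\widehat{v}(s,\eta)\,d\eta,
\end{equation*}
and then substitute $\widehat{u}(s,\xi-\eta)=e^{-\theta(s,\xi-\eta,\varepsilon)}\widehat{\underline{u}}(s,\xi-\eta)$ and $\widehat{v}(s,\eta)=e^{-\theta(s,\eta,\varepsilon)}\widehat{\underline{v}}(s,\eta)$, so that the integrand of $\int_0^t e^{-c_0(t-s)|\xi|^2}\,\widehat{\underline{\alpha}_t(u,v)}(s,\xi)\,ds$ carries the exponential factor $e^{-c_0(t-s)|\xi|^2}e^{\theta(t,\xi,\varepsilon)-\theta(s,\xi-\eta,\varepsilon)-\theta(s,\eta,\varepsilon)}$.

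The key step is then to establish the pointwise bound
\begin{equation*}
e^{-c_0(t-s)|\xi|^2}\,e^{\theta(t,\xi,\varepsilon)-\theta(s,\xi-\eta,\varepsilon)-\theta(s,\eta,\varepsilon)}\leq e^{\frac{\lambda^2}{4(1-\varepsilon)c_0}}\,e^{-\varepsilon c_0(t-s)|\xi|^2},
\end{equation*}
valid for all $0\leq s\leq t\leq T$ and all $\xi,\eta\in\R^d$. To prove it I would first use the inverse triangle inequality $|\xi-\eta|+|\eta|\geq|\xi|$ to bound the full exponent from above by $\frac{\lambda^2(2s-t)}{4(1-\varepsilon)c_0T}+\frac{\lambda(t-s)}{\sqrt{T}}|\xi|-c_0(t-s)|\xi|^2$. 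Then I would split $c_0=(1-\varepsilon)c_0+\varepsilon c_0$ and complete the square in $|\xi|$ against the $(1-\varepsilon)c_0$ portion, using $\max_{x\geq0}(-Ax^2+Bx)=B^2/(4A)$ with $A=(1-\varepsilon)c_0(t-s)$ and $B=\lambda(t-s)/\sqrt{T}$; this contributes exactly $\frac{\lambda^2(t-s)}{4(1-\varepsilon)c_0T}$ while leaving the residual smoothing $-\varepsilon c_0(t-s)|\xi|^2$. Adding the two $\lambda^2$-terms gives $\frac{\lambda^2 s}{4(1-\varepsilon)c_0T}$, which is at most $\frac{\lambda^2}{4(1-\varepsilon)c_0}$ since $s\leq T$. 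This is precisely the point where the damping coefficient $\frac{\lambda^2}{4(1-\varepsilon)c_0}\frac{t}{T}$ built into $\theta$ has been calibrated.

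Once this inequality is available, the constant $e^{\frac{\lambda^2}{4(1-\varepsilon)c_0}}$ factors out of the integral and one is left with exactly the estimate carried out in the proof of Lemma \ref{Estimation de la norme de Kato-pseudomesure des nonlin }, but with $\underline{u},\underline{v}$ in place of $u,v$ and the heat coefficient replaced by $\varepsilon c_0$. Concretely I would invoke Lemma \ref{Estimation de convolution pour 1/|.|^alpha et 1/|.|^beta Lemme} with $\alpha=d-1+\frac2p$, $\beta=d-2+\frac2p$ for the $\alpha$-term (and $\alpha=\beta=d-1+\frac2p$ for the $\beta$-term, after pulling $|b(\xi)|\lesssim|\xi|$ out front), and then the Beta-function bound on $\int_0^t e^{-\varepsilon c_0(t-s)|\xi|^2}s^{-2/p}\,ds$ exactly as before. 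This produces both inequalities with an implicit constant depending only on $\varepsilon$ and $c_0$, together with the announced prefactor.

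The main obstacle is the completion-of-squares inequality: the delicate point is that the $\lambda$-linear frequency growth must be absorbed by only the fraction $(1-\varepsilon)$ of the parabolic decay so that a genuine smoothing factor $e^{-\varepsilon c_0(t-s)|\xi|^2}$ survives, and that the remaining quadratic-in-$\lambda$ contributions collapse against the damping term to a bound independent of $t$ and $\xi$. Everything downstream is a routine transcription of the earlier bilinear estimates.
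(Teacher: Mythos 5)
Your proposal is correct and follows essentially the same route as the paper: the paper's proof factors your single completion-of-squares inequality into three displayed properties of the symbol $\theta$ (time-additivity, absorption of $\theta(t-s,\xi,\varepsilon)$ into the heat decay leaving $e^{-\varepsilon c_0(t-s)|\xi|^2}$, and almost-subadditivity in $\xi$ at time $s$ with the extra term $\frac{\lambda^2}{4(1-\varepsilon)c_0}\frac{s}{T}\leq\frac{\lambda^2}{4(1-\varepsilon)c_0}$), and then concludes exactly as you do via Lemma \ref{Estimation de convolution pour 1/|.|^alpha et 1/|.|^beta Lemme} and the Beta-function bound. The calibration you identify as the delicate point is precisely the content of the paper's inequality \eqref{inegalite carre theta} (whose right-hand side should read $-\varepsilon c_0 t|\xi|^2$), so no gap remains.
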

\begin{proof}
First, we recall some properties of symbols of $\theta (t,D,\varepsilon)$. For every $t$ and $s$  in $[0,T]$ and for all $\xi$ and $\eta$ in $\R^d$, we have  
\begin{align}
    \theta (t,\xi,\varepsilon) & =\theta(t-s,\xi,\varepsilon)+\theta (s,\xi,\varepsilon),\label{linearite en temps theta}\\
    \theta(t,\xi,\varepsilon)-c_0t|\xi|^2 & =-\frac{\lambda^2}{4(1-\varepsilon)c_0}\frac{t}{T}+\lambda\frac{t}{\sqrt{T}}|\xi|-c_0t|\xi|^2\leq \varepsilon c_0 t|\xi|^2,\label{inegalite carre theta}\\
    \theta(t,\xi,\varepsilon) & \leq\theta(t,\xi-\eta,\varepsilon)+\theta(t,\eta,\varepsilon)+\frac{\lambda^2}{4(1-\varepsilon)c_0}\frac{t}{T}\label{presque sous additivite theta}.
\end{align}
Let $\xi\in\R^d$. Using \eqref{linearite en temps theta} and \eqref{inegalite carre theta}, for all $t\in[0,T]$, we get
\begin{align*}
    |\int_{0}^{t}e^{-c_0(t-s)|\xi|^2}\widehat{\underline{\alpha}_{t}(u,v)}(s,\xi)ds|&\leq|\int_{0}^{t}e^{-c_0(t-s)|\xi|^2+\theta(t-s,\xi,\varepsilon)}\left(e^{\theta(s,\xi,\varepsilon)}\widehat{\alpha(u,v)}(s,\xi)\right)ds|,\\
    & \leq \int_{0}^{t}e^{-\varepsilon c_0(t-s)|\xi|^2}\left(e^{\theta(s,\xi,\varepsilon)}|\widehat{\alpha(u,v)}(s,\xi)|\right)ds.
\end{align*}
Therefore, we deduce from \eqref{presque sous additivite theta} that, for all $s\in [0,T]$, we have
\begin{align*}
    e^{\theta(s,\xi,\varepsilon)}|\widehat{\alpha(u,v)}(s,\xi)| & \lesssim \int e^{\theta(s,\xi,\varepsilon)}|\widehat{u}(s,\xi-\eta)||\eta||\widehat{v}(s,\eta)|d\eta,\\
    &\lesssim \int e^{\frac{\lambda^2}{4(1-\varepsilon)c_0}\frac{s}{T}}|\widehat{\underline{u}}(s,\xi-\eta)||\eta||\widehat{\underline{v}}(s,\eta)|d\eta,\\
    & \lesssim e^{\frac{\lambda^2}{4(1-\varepsilon)c_0}}\int|\widehat{\underline{u}}(s,\xi-\eta)||\eta||\widehat{\underline{v}}(s,\eta)|d\eta,\\
    & \lesssim \frac{e^{\frac{\lambda^2}{4(1-\varepsilon)c_0}}}{s^{\frac{2}{p}}}\int\frac{d\eta}{|\xi-\eta|^{d-1+\frac{2}{p}}|\eta|^{d-1+\frac{2}{p}}}\|\underline{u}\|_{K^{p,d-1}_{T}}\|\underline{v}\|_{K^{p,d-1}_{T}}.
\end{align*}
Using the lemma \ref{Estimation de convolution pour 1/|.|^alpha et 1/|.|^beta Lemme}, we obtain 
\begin{equation}
    \label{exponetielle theta alpha chapeau}
    e^{\theta(s,\xi,\varepsilon)}|\widehat{\alpha(u,v)}(s,\xi)|\lesssim \frac{e^{\frac{\lambda^{2}}{4(1-\varepsilon)c_0}}}{|\xi|^{d-3+\frac{4}{p}}s^{\frac{2}{p}}}\|\underline{u}\|_{K^{p,d-1}_{T}}\|\underline{v}\|_{K^{p,d-1}_{T}}.
\end{equation}
Similarly to the end of the proof of the lemma \ref{Estimation de la norme de Kato-pseudomesure des nonlin }, we deduce from \eqref{exponetielle theta alpha chapeau}, that
\begin{align*}
    |\int_{0}^{t}e^{-c_0(t-s)|\xi|^2}\widehat{\underline{\alpha}_t(u,v)}(s,\xi)ds|& \lesssim e^{\frac{\lambda^2}{4(1-\varepsilon)c_0}}\int_{0}^{t}
\frac{e^{-\varepsilon c_0(t-s)|\xi|^2}}{s^{\frac{2}{p}}|\xi|^{d-3+\frac{4}{p}}}ds\|\underline{u}\|_{K^{p,d-1}_{T}}\|\underline{v}\|_{K^{p,d-1}_{T}}\\
& \lesssim \frac{e^{\frac{\lambda^2}{4(1-\varepsilon)c_0}}}{t^{\frac{1}{p}}|\xi|^{d-1++\frac{2}{p}}}\|\underline{u}\|_{K^{p,d-1}_{T}}\|\underline{v}\|_{K^{p,d-1}_{T}}.
\end{align*}
The first inequality follows. The proof of the second inequality is similar, with some modifications in the same manner as the proof of \eqref{produit beta}.
\end{proof}
As in the proof of theorem \ref{**Existence global NSK pseudo-measure**}, using lemma \ref{Estimation de la norme de Kato-pseudomesure des nonlin type CGZ}, we can prove the following estimate.
\begin{lem}
\label{lemme du type CGZ}
Let $\varepsilon>0$ and let $\delta$ be in $[0,\frac 2p]$. Let $(a_0,u_0)\in (PM^{d-1+\delta}\cap PM^{d+\delta})\times  PM^{d-1+\delta}$ an initial data and $(a,u)\in X_T$ a solution of the Cauchy problem \eqref{NSK quantique}\eqref{NSK quantique condition initial}. There exist two positive constants $C$, which depend only of $\nu$, $\mu$, $\alpha$ and $\kappa$, and $C_{\varepsilon}$, which depend only of $\nu$, $\mu$, $\alpha$, $\kappa$ and $\varepsilon$, such that
\begin{equation}
    \|(\underline{a},\underline{u})\|_{X_T}
\leq C\left(\|e^{\varepsilon c_0 t\Laplace}(a_0, u_0)\|_{X_T}+C_{\varepsilon}e^{\frac{\lambda^2}{4(1-\varepsilon)c_0}}\|(\underline{a},\underline{u})\|_{X_{T}}^{2}\right).
\end{equation}
\end{lem}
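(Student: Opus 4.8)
The plan is to apply the Fourier multiplier $e^{\theta(t,D,\varepsilon)}$ to the Duhamel formulation of \eqref{NSK quantique}\eqref{NSK quantique condition initial},
$$
(a(t),u(t))=W(t)(a_0,u_0)+\int_{0}^{t}W(t-s)\bigl(f(u,a)(s),g(u,a)(s)\bigr)\,ds,
$$
and to estimate $\|(\underline a,\underline u)\|_{X_T}$ by reproducing, in the weighted norm $e^{\theta(\cdot,D,\varepsilon)}$, the very scheme used in the proof of theorem \ref{**Existence global NSK pseudo-measure**}. The linear evolution will generate the term $\|e^{\varepsilon c_0 t\Laplace}(a_0,u_0)\|_{X_T}$, whereas each nonlinear term will be handled by lemma \ref{Estimation de la norme de Kato-pseudomesure des nonlin type CGZ}, producing the common factor $e^{\frac{\lambda^2}{4(1-\varepsilon)c_0}}$ and a quadratic bound in $\|(\underline a,\underline u)\|_{X_T}$.

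For the linear contribution I would start from the homogeneous part of the pointwise bound of lemma \ref{Estimation Duhamel NSK lemme}, that is $|(\widehat a,|\xi|\widehat a,\widehat u)|(t,\xi)\lesssim e^{-c_0 t|\xi|^{2}}|(\widehat a_0,|\xi|\widehat a_0,\widehat u_0)|(\xi)$, and combine it with the completion-of-square inequality \eqref{inegalite carre theta}, read as $\theta(t,\xi,\varepsilon)-c_0 t|\xi|^{2}\le -\varepsilon c_0 t|\xi|^{2}$. Multiplying by $e^{\theta(t,\xi,\varepsilon)}$ turns the factor $e^{-c_0 t|\xi|^2}$ into $e^{-\varepsilon c_0 t|\xi|^2}$, which is exactly the symbol of $e^{\varepsilon c_0 t\Laplace}$; taking the $X_T$-norm of the three components $(\underline a,|D|\underline a,\underline u)$ then yields a bound of the linear part by $C\,\|e^{\varepsilon c_0 t\Laplace}(a_0,u_0)\|_{X_T}$.

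For the nonlinear contribution I would first invoke lemma \ref{Estimation Duhamel NSK lemme} once more to majorise, in Fourier and component by component, the weighted Duhamel integral $\int_0^t e^{\theta(t,D,\varepsilon)}W(t-s)(\cdots)\,ds$ by $\int_0^t e^{c_0(t-s)\Laplace}e^{\theta(t,D,\varepsilon)}(\cdots)\,ds$ applied to the forcings $f$, $|D|f$ and $g$; this is precisely the left-hand side of lemma \ref{Estimation de la norme de Kato-pseudomesure des nonlin type CGZ}. It then remains to recognise each forcing as an $\alpha$- or $\beta$-type bilinear form, distributing the derivatives with the help of the two facts that $e^{\theta(t,D,\varepsilon)}$ commutes with every Fourier multiplier and that $\|\nabla\underline a\|_{K_{T}^{p,d-1}}\sim\|\underline a\|_{K_{T}^{p,d}}$: \textit{(i)} $f=-u\cdot\nabla a$ and $g_1=-u\cdot\nabla u$ are $\alpha$-type with $a(D)=\nabla$, yielding $\|\underline u\|_{K_{T}^{p,d-1}}\|\underline a\|_{K_{T}^{p,d-1}}$ and $\|\underline u\|_{K_{T}^{p,d-1}}^{2}$; \textit{(ii)} $|D|f=-|D|(u\cdot\nabla a)$ is $\beta$-type with $v=\nabla a$, yielding $\|\underline u\|_{K_{T}^{p,d-1}}\|\underline a\|_{K_{T}^{p,d}}$; \textit{(iii)} $g_2=\mu\,\nabla a\cdot\nabla u+(\mu+\nu)\nabla a\cdot Du$ is $\alpha$-type with first argument $\nabla a$, yielding $\|\underline a\|_{K_{T}^{p,d}}\|\underline u\|_{K_{T}^{p,d-1}}$; and \textit{(iv)} $g_3=\frac{\kappa}{2}\nabla(\nabla a\cdot\nabla a)$ is $\beta$-type with $u=v=\nabla a$, yielding $\|\underline a\|_{K_{T}^{p,d}}^{2}$. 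Every such product is bounded by $\|(\underline a,\underline u)\|_{X_T}^{2}$, and each application of lemma \ref{Estimation de la norme de Kato-pseudomesure des nonlin type CGZ} contributes the factor $e^{\frac{\lambda^2}{4(1-\varepsilon)c_0}}$.

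Summing the linear contribution and the five nonlinear ones, and absorbing the numerical constants into $C$ (depending only on $\nu,\mu,\alpha,\kappa$) and $C_\varepsilon$ (depending in addition on $\varepsilon$), gives the announced inequality. I expect the main obstacle to be purely bookkeeping: ensuring that in each convective and capillary term the derivatives are split so that the outer multiplier has degree exactly one and both factors are genuinely measured in $K_{T}^{p,d-1}$, which is what allows lemma \ref{Estimation de la norme de Kato-pseudomesure des nonlin type CGZ} to apply. The analytic weight itself introduces no new difficulty, since its effect has already been absorbed into that lemma through the algebraic identities \eqref{linearite en temps theta}--\eqref{presque sous additivite theta}.
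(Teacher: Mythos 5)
Your proposal is correct and follows exactly the route the paper intends: the paper itself only remarks that the lemma is proved ``as in the proof of theorem \ref{**Existence global NSK pseudo-measure**}, using lemma \ref{Estimation de la norme de Kato-pseudomesure des nonlin type CGZ}'', and your write-up supplies precisely that argument — the weighted linear part via lemma \ref{Estimation Duhamel NSK lemme} together with the completed-square inequality \eqref{inegalite carre theta} (whose right-hand side should indeed be read as $-\varepsilon c_0 t|\xi|^2$, as you do), and the nonlinear terms via the correct identification of $f,|D|f,g_1,g_2,g_3$ as $\alpha$- or $\beta$-type bilinear forms. No gaps.
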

The lemma above combined with a bootstrap argument is the key point to prove the following theorem, that is the main result of this section.
\begin{theo}
\label{**Estimation du rayon d'analyticite pres de 0**}
Let $(a_0,u_0)\in (PM^{d-1+\frac{2}{p}}\cap PM^{d+\frac{2}{p}})\times PM^{d-1+\frac{2}{p}}$. If there exists a solution $(a,u)\in X_{T^{\star}}$ of \eqref{NSK quantique}\eqref{NSK quantique condition initial}for some positive times $T^{\star}$, then
\begin{equation}
    \liminf_{T\longrightarrow 0^{+}}\frac{\rad(a(t),u(t))}{\sqrt{T\displaystyle\left\lvert\ln{\left(T\|(a_0,|D|a_0,u_{0})\|_{PM^{d-1+\frac{2}{p}}}^{\frac{1}{p}}\right)}\right\rvert}}\geq \sqrt{\frac{4}{p}}.
\end{equation}
\end{theo}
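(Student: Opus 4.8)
The plan is to adapt the Chemin--Gallagher--Zhang continuation scheme to the weighted unknown $\underline{(a,u)}$ of \eqref{definition de f barre}. The key point is that finiteness of $\|(\underline a,\underline u)\|_{X_T}$ encodes the Fourier weight $e^{\frac{\lambda t}{\sqrt T}|D|}$, whose coefficient of $|D|$ equals $\lambda\sqrt T$ at the final time $t=T$; hence $\rad(a(T),u(T))\ge\lambda\sqrt T$. Everything then reduces to making $\lambda$ as large as possible, as a function of $T$ and of $\mathcal I_0:=\|(a_0,|D|a_0,u_0)\|_{PM^{d-1+\frac2p}}$, while keeping $\underline{(a,u)}$ bounded in $X_T$.

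First I would combine the Duhamel representation of $(a,u)$ with Lemma \ref{lemme du type CGZ} to obtain the scalar inequality
\[ Y\le C\Big(X_0+C_\varepsilon e^{\frac{\lambda^2}{4(1-\varepsilon)c_0}}Y^2\Big),\qquad Y:=\|(\underline a,\underline u)\|_{X_T},\ \ X_0:=\|e^{\varepsilon c_0 t\Laplace}(a_0,u_0)\|_{X_T}. \]
The linear term I would estimate by hand: from $|\widehat{a_0}(\xi)|\le\mathcal I_0|\xi|^{-(d-1+\frac2p)}$ (and its analogues for $|D|a_0$ and $u_0$) together with $\sup_\xi e^{-\varepsilon c_0 t|\xi|^2}=1$, one gets $X_0\le\tilde C\,T^{\frac1p}\mathcal I_0$, so that the entire $T$- and data-dependence sits in the single small number $T^{1/p}\mathcal I_0$.

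The core is a continuation argument in $\lambda$, and here I would be careful \emph{not} to assume $Y(\lambda)<\infty$ for large $\lambda$: Theorem \ref{theoreme de type Kato analytic pour pseudo-mesure dans l'espace entier} only delivers the radius $c_0\sqrt t$, which controls $e^{\frac{\lambda t}{\sqrt T}|D|}$ only for $\lambda\le c_0$, whereas I need $\lambda\sim\sqrt{|\ln T|}$. Instead I would consider the set of $\lambda\ge0$ with $Y(\lambda)$ finite and $Y(\lambda)\le 2CX_0$; it contains a neighbourhood of $0$ (where $Y(0)=\|(a,u)\|_{X_T}\to0$ as $T\to0$, since $t^{1/p}\|(a,u)(t)\|\to0$), and I would prove it open and closed by reinserting $Y\le2CX_0$ into the quadratic inequality: whenever the threshold $4C^2C_\varepsilon e^{\frac{\lambda^2}{4(1-\varepsilon)c_0}}X_0<1$ holds, the inequality forces $Y\le\frac32CX_0$, a strict gain that propagates the bound. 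Saturating this threshold with $X_0\le\tilde C T^{1/p}\mathcal I_0$ gives the admissible range
\[ \frac{\lambda^2}{4(1-\varepsilon)c_0}<\Big|\ln\big(4C^2C_\varepsilon\tilde C\,T^{\frac1p}\mathcal I_0\big)\Big|, \]
whose right-hand side, as $T\to0^+$, is dominated by $\frac1p|\ln T|$ and is comparable to $\frac1p|\ln(T\mathcal I_0^{1/p})|$.

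Finally I would choose $\lambda$ just below this threshold, so that $\underline{(a,u)}\in X_T$ and $\rad(a(T),u(T))\ge\lambda\sqrt T$. Since $|\ln(T\mathcal I_0^{1/p})|\approx p|\ln X_0|$ to leading order as $T\to0^+$, while $\lambda^2\approx 4(1-\varepsilon)c_0|\ln X_0|$, the quotient $\rad(a(T),u(T))/\sqrt{T|\ln(T\mathcal I_0^{1/p})|}$ has $\liminf$ at least $\sqrt{\frac{4(1-\varepsilon)c_0}{p}}$; a last passage $\varepsilon\to0^+$ gives $\sqrt{\frac{4c_0}{p}}$, which is the announced bound $\sqrt{\frac4p}$ up to the explicit value of the heat constant $c_0$ of Lemma \ref{Estimation Duhamel NSK lemme}. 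I expect the main obstacle to be twofold: establishing the \emph{a priori finiteness and continuity} of $\lambda\mapsto Y(\lambda)$ beyond the safe range $\lambda\le c_0$, which is exactly why the open--closed continuation (rather than a plain fixed point) is needed; and the \emph{order of the two limits}, since $C_\varepsilon\to\infty$ as $\varepsilon\to0$ forces one to send $T\to0$ first so that the divergent factor $|\ln T|$ swallows $\ln C_\varepsilon$, keeping $\ln(\mathrm{const})$, $\ln\mathcal I_0$ and $\ln C_\varepsilon$ all genuinely lower order.
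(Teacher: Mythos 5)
Your proposal follows essentially the same route as the paper: the a priori inequality of Lemma \ref{lemme du type CGZ}, the linear bound $\|e^{\varepsilon c_0 t\Laplace}(a_0,u_0)\|_{X_T}\lesssim T^{1/p}\|(a_0,|D|a_0,u_0)\|_{PM^{d-1+\frac{2}{p}}}$, a bootstrap that propagates $\|(\underline a,\underline u)\|_{X_T}\lesssim \|e^{\varepsilon c_0 t\Laplace}(a_0,u_0)\|_{X_T}$ up to the threshold $\lambda^2\sim\frac{4(1-\varepsilon)}{p}\left|\ln\big(T\|(a_0,|D|a_0,u_0)\|_{PM^{d-1+\frac{2}{p}}}^{1/p}\big)\right|$, the conclusion $\rad(a(T),u(T))\ge\lambda\sqrt T$, and a final passage $\varepsilon\to 0^+$. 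Your explicit open--closed continuation in $\lambda$ and your tracking of the constant $c_0$ (which the paper's definition of $\lambda_T$ silently drops) are careful refinements of the same argument, not a different approach.
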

\begin{proof}
We use a bootstrap argument. Let $\varepsilon>0$. For every $T\in [0,T^{\star}]$, we denote by $H(T)$ the following induction hypothesis, 
\begin{equation}
     \|(\underline{a},\underline{u})\|_{X_T}\leq D_{\varepsilon}e^{-\frac{\lambda_{T}^{2}}{4(1-\varepsilon)c_0}},
\end{equation}
where the positive real number $\lambda_T$ will be chosen later and 
$$
D_{\varepsilon}:=\frac{1}{C_{\varepsilon}4\mu C},
$$
with
\begin{equation}
\label{definition de mu}
    \mu:=\frac{1}{2}\frac{1}{2C+4}.
\end{equation}
If $H(T)$ is satisfying, we deduce from the lemma \ref{lemme du type CGZ} that
$$
\|(\underline{a},\underline{u})\|_{X_T}\leq C\|e^{\varepsilon c_0 t\Laplace}(a_0,u_0)\|_{X_T} +\frac{1}{4\mu}\|(\underline{a},\underline{u})\|_{X_{T}},
$$
that is 
\begin{equation}
    \label{inequality type CGZ pour bootstrap}
    \|(\underline{a},\underline{u})\|_{X_T}\leq\frac{4\mu C}{1-4\mu}\|e^{\varepsilon c_0 t\Laplace}(a_0,u_0)\|_{X_T}.
\end{equation}
For all $T\in [0,T^{\star}]$, we have
$$
\|e^{\varepsilon c_0t\Laplace}(a_0,u_0)\|_{X_T}\leq D_{p,\varepsilon}T^{\frac{1}{p}}\|(a_0,|D|a_0,u_0)\|_{PM^{d-1+\frac{2}{p}}}.
$$
Let us define
$$
T_{\varepsilon}:=\eta_{\varepsilon}\|(a_0,|D|a_0,u_0)\|_{PM^{d-1+\frac{2}{p}}}^{-p},
$$
where
$$
\eta_{\varepsilon}:=\left(\frac{D_{\varepsilon}}{2D_{p,\varepsilon}}\right)^{p}.
$$
Then, for every $T\in[0,T_{\varepsilon}]$, we have
    
$$
2D_{p,\varepsilon}T^{\frac{1}{p}}\|(a_0,|D|a_0,u_0)\|_{PM^{d-1+\frac{2}{p}}}\leq D_{\varepsilon}.
$$
Now, for all $T\in[0,T^{\star}]$, we define the positive real number $\lambda_{T}$ by setting
\begin{equation}
    \label{definition de lambda_T}
    \lambda_{T}^{2}:=\frac{4(1-\varepsilon)}{p}\displaystyle\left\lvert\ln{\left(\frac{\eta_{\varepsilon}}{T\|(a_0,|D|a_0,u_0)\|_{PM^{d-1+\frac{2}{p}}}^{\frac{1}{p}}}\right)}\right\rvert.
\end{equation}
According to \eqref{definition de mu}, we have $\frac{4\mu C}{1-4\mu}<2$. For $T\in[0,T_{\varepsilon}]$, assuming $H(T)$, we deduce from \eqref{inequality type CGZ pour bootstrap}, that
\begin{align*}
    \|(\underline{a},\underline{u})\|_{X_T} & \leq\frac{4\mu C}{1-4\mu} D_{p,\varepsilon}T^{\frac{1}{p}}\|(a_0,|D|a_0,u_0)\|_{PM^{d-1+\frac{2}{p}}},\\
    & < 2D_{p,\varepsilon} T^{\frac{1}{p}}\|(a_0,|D|a_0,u_0)\|_{PM^{d-1+\frac{2}{p}}},\\
    & =D_{\varepsilon}e^{-\frac{\lambda_{T}^{2}}{4(1-\varepsilon)c_0}}.
\end{align*}
This in turn shows that $H(T)$ holds for $T\in[0,T_{\varepsilon}]$. Moreover, for all $T\in[0,T_{\varepsilon}]$, from \eqref{definition de lambda_T}, it follows 
$$
T^{\frac{1}{p}}\|e^{\lambda_T\sqrt{T}|D|}(a(T),|D|a
(T),u(T))\|_{PM^{d-1+\frac{2}{p}}}\leq D_{\varepsilon}.
$$
 Hence, for every $T\in[0,T_{\varepsilon}]$, we have
 $$
 R(T)\geq\sqrt{\frac{4(1-\varepsilon)}{p}T\displaystyle\left\lvert\ln{\left(\frac{\eta_{\varepsilon}}{T\|(a_0,|D|a_0,u_0)\|_{PM^{d-1+\frac{2}{p}}}^{\frac{1}{p}}}\right)}\right\rvert},
 $$
 where $R(T):=\rad (a(t),u(t))$. This shows that
     
 \begin{equation}
     \label{limite inf avec epsilon type CGZ}
     \liminf_{T\rightarrow 0^{+}}\frac{R(T)}{\sqrt{T\displaystyle\left\lvert\ln{\left(T\|(a_0,|D|a_0,u_0)\|_{PM^{d-1+\frac{2}{p}}}^{\frac{1}{p}}\right)}\right\rvert}}\geq \sqrt{\frac{4(1-\varepsilon)}{p}}.
 \end{equation}
 Since \eqref{limite inf avec epsilon type CGZ} holds for $\varepsilon>0$ chosen arbitrarily, the theorem is proved.
\end{proof}
For the case of critical initial data, the proof of theorem $1.3$ $(b)$ of \cite{Ontheradiusofanalyticityofsolutiontosemi-linearparabolicsystems} for the semi-linear parabolic equation cannot be adjusted to our functional framework, due to the point-wise feature of pseudo-measure spaces.

\appendix
\section{Characterization of analyticity with Fourier transform}
In this appendix we prove proposition \ref{Equivalence entre decroissance exponentielle et analycite}.
\begin{prop}
\label{Equivalence entre decroissance exponentielle et analycite}
Let $r<d$ and $\sigma>0$. Let $u$ be in $PM^{r}(\R^d)$. If $e^{\sigma|D|}u\in PM^{r}(\R^d)$, then $u$ extends to a unique holomorphic function $U$ in $\mathcal{H}(S_{\sigma})$.
\end{prop}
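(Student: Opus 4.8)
The plan is to realize the holomorphic extension $U$ as the inverse Fourier transform of $\widehat u$ evaluated at complex arguments, the point being that the exponential decay of $\widehat u$ allows one to shift the contour into the complex domain. First I would convert the two hypotheses into a single pointwise bound: since both $u$ and $e^{\sigma|D|}u$ lie in $PM^{r}(\R^d)$, for every $\xi\neq 0$ we have
\[
|\widehat u(\xi)|\le \|e^{\sigma|D|}u\|_{PM^{r}}\,|\xi|^{-r}e^{-\sigma|\xi|}.
\]
Because $r<d$, the factor $|\xi|^{-r}$ is integrable in a neighbourhood of the origin (in polar coordinates $\int_{|\xi|\le 1}|\xi|^{-r}\,d\xi\sim\int_0^1 \rho^{d-1-r}\,d\rho<+\infty$), while the exponential factor controls the behaviour at infinity. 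Hence $\widehat u\in L^1(\R^d)$, and consequently $u$ is represented by a bounded continuous function given by the usual Fourier inversion formula.

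Next I would set, for $z=x+iy\in S_{\sigma}$ (so that $|y|=|\im(z)|<\sigma$),
\[
U(z):=\frac{1}{(2\pi)^d}\int_{\R^d}e^{iz\cdot\xi}\,\widehat u(\xi)\,d\xi,\qquad z\cdot\xi:=\sum_{j=1}^{d}z_j\xi_j.
\]
Since $|e^{iz\cdot\xi}|=e^{-y\cdot\xi}\le e^{|y||\xi|}$, for every $z$ in a substrip $\{|\im(z)|\le\sigma'\}$ with $\sigma'<\sigma$ the integrand is dominated by $\|e^{\sigma|D|}u\|_{PM^{r}}\,|\xi|^{-r}e^{-(\sigma-\sigma')|\xi|}$, which is integrable by the same computation as above. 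This yields absolute and locally uniform convergence of the defining integral on all of $S_{\sigma}$.

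Holomorphy of $U$ then follows in the standard manner: for each fixed $\xi$ the map $z\mapsto e^{iz\cdot\xi}$ is entire, and the locally uniform domination justifies differentiating under the integral sign and checking the Cauchy--Riemann equations in each variable $z_j$ (equivalently, one may combine Morera's theorem with Fubini). Restricting to real $z=x$ and invoking Fourier inversion, which is legitimate because $\widehat u\in L^1(\R^d)$, gives $U(x)=u(x)$, so $U$ is indeed an extension of $u$. For uniqueness I would appeal to the identity theorem for holomorphic functions of several complex variables: $S_{\sigma}$ is a connected open subset of $\C^d$ and $\R^d$ is a maximally totally real submanifold, hence a set of uniqueness; concretely, a holomorphic function vanishing on $\R^d$ has all of its real partial derivatives vanishing there, and the Cauchy--Riemann equations then force the entire complex Taylor expansion to vanish at every real point, so the function is identically $0$ on the connected set $S_{\sigma}$.

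I expect the only genuinely delicate points to be the integrability near $\xi=0$, where the hypothesis $r<d$ enters in an essential way, and the correct invocation of the several-variable identity theorem for the uniqueness statement; the verification of holomorphy itself is routine once the locally uniform bound on the integrand is established.
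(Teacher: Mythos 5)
Your proposal is correct and follows essentially the same route as the paper's proof in the appendix: the pointwise bound $|\widehat u(\xi)|\le\|e^{\sigma|D|}u\|_{PM^r}|\xi|^{-r}e^{-\sigma|\xi|}$ gives $\widehat u\in L^1(\R^d)$ thanks to $r<d$, and the extension is defined by the same complex Fourier integral, with holomorphy obtained on each substrip $S_{\tilde\sigma}$ via domination by an integrable, $z$-independent majorant. The only difference is that you spell out the uniqueness via the identity theorem, which the paper leaves implicit.
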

\begin{proof}
Since $\widehat{u}\in L^{1}_{loc}(\R^d)$, we deduce that $\widehat{u}$ and $|\widehat{u}|^2$ are integrable on the a neighborhood of $0$ and using that $e^{\sigma|D|}u\in PM^{r}(\R^d)$, it is easy to conclude that $\widehat{u}\in L^1(\R^d)\cap L^2(\R^d)$. Then, for almost every $x\in\R^d$, we have 
\begin{equation}
    \label{equalite u in sa TF invers pp}
    u(x)=\frac{1}{(2\pi)^d}\int_{\R^d}e^{ix\cdot\xi}\widehat{u}(\xi)d\xi.
\end{equation}
We denote by $v(x)$ the right-hand side of this inequality. It is sufficient to prove that the function $x\in\R^d\mapsto v(x)$ extends to a holomorphic function on $S_{\sigma}$. If $\Tilde{\sigma}\in]0,\sigma[$, then for all $z\in S_{\Tilde{\sigma}}$, we have
\begin{align*}
    |e^{iz\cdot\xi}\widehat{u}(\xi)| & \leq e^{|\im(z)||\xi|}|\widehat{u}(\xi)|\\
    & \leq e^{\Tilde{\sigma}|\xi|}|\widehat{u}(\xi)|\\
    & \leq \frac{e^{-(\sigma-\Tilde{\sigma})|\xi|}}{|\xi|^r}\|e^{\sigma|D|}u\|_{PM^r}.
\end{align*}
Using the hypothesis $r<d$, we deduce by a classical argument that the function $\xi\mapsto e^{ix\cdot\xi}\widehat{u}(\xi)$ is in $L^1(\R^d)$. This legitimate, for every $z\in  S_{\sigma}$, the definition of the quantities
$$
U(z):=\frac{1}{(2\pi)^d}\int e^{iz\cdot\xi}\widehat{u}(\xi)d\xi.
$$
From
\begin{equation}
   \label{Pour holomorphi sous le signe some espac entier} 
  |e^{iz\cdot\xi}\widehat{u}(\xi)|\leq \frac{e^{-(\sigma-\Tilde{\sigma})|\xi|}}{|\xi|}\|e^{\sigma|D|}u\|_{PM^r},
\end{equation}
that holds for each $z\in S_{\Tilde{\sigma}}$ and $\xi\in\R^d$, and observing that the right-hand side of \eqref{Pour holomorphi sous le signe some espac entier} defines a $L^{1}(\R^d)$ function that does not depend on $z\in S_{\Tilde{\sigma}}$, we deduce that $U\in\mathcal{H}(S_{\Tilde{\sigma}})$. Since $\Tilde{\sigma}$ is arbitrarily chosen in $]0,\Tilde{\sigma}[$, we deduce that $U$ is holomorphic over $S_{\sigma}$.
\end{proof}

\newpage
\bibliographystyle{siam}
\bibliography{Bibliographie}
\end{document}